\documentclass[12pt,twoside]{amsart}

\usepackage{hyperref,mathtools}
\usepackage{amsthm, amsmath, amscd, amssymb,centernot}
\usepackage{amsfonts}
\usepackage[all]{xy}
\usepackage{palatino,eulervm}
\usepackage[T1]{fontenc}
\usepackage{xcolor}
\usepackage[left=2.5cm,top=2.5cm,bottom=3cm,right=2.5cm]{geometry}
\setlength{\headheight}{15.2pt}
\usepackage{tikz-cd}



\setlength\parskip{.1in}
\setlength\parindent{0.2in}

\DeclareMathOperator{\rank}{rank}

\newcommand{\complex}{\mathbb C}

\theoremstyle{plain}
\numberwithin{equation}{section}
\newtheorem{theorem}{Theorem}[section]
\newtheorem*{theorem*}{Theorem}
\newtheorem{proposition}[theorem]{Proposition}
\newtheorem{lemma}[theorem]{Lemma}
\newtheorem{corollary}[theorem]{Corollary}

\theoremstyle{definition}
\newtheorem{question}[theorem]{Question}

\newtheorem{remark}[theorem]{Remark}
\newtheorem{example}[theorem]{Example}

\newtheorem{assumption}[theorem]{Assumption}

\begin{document}
\title{Seshadri constants on some flag bundles}
\author[Krishna Hanumanthu]{Krishna Hanumanthu}
\address{Chennai Mathematical Institute, H1 SIPCOT IT Park, Siruseri, Kelambakkam 603103, India}
\email{krishna@cmi.ac.in}

\author[Jagadish Pine]{Jagadish Pine}
\address{Chennai Mathematical Institute, H1 SIPCOT IT Park, Siruseri, Kelambakkam 603103, India}
\email{pine@cmi.ac.in}

\subjclass[2010]{14C20, 14H60, 14F05 }
\thanks{Both authors were partially supported by a grant from Infosys
  Foundation. }

\date{28 March, 2023}
\maketitle
\begin{abstract}
Let $X$ be a smooth complex projective curve $X$ and let $E$ be a vector bundle on $X$ which is 
not semistable.  We consider a flag bundle $\pi: \text{Fl}(E) \to X$ parametrizing certain flags of 
 fibers of $E$. The dimensions of the successive quotients of the flags 
 are determined by the ranks of vector bundles appearing
 in the Harder-Narasimhan filtration of $E$.  We compute the Seshadri constants of nef line bundles on 
 $\text{Fl}(E)$.
\end{abstract}

\section{Introduction}\label{intro}

Seshadri constants were introduced by Demailly \cite{Dem} in 1990 as a tool to study jet separation of line bundles on complex projective varieties. They have become an important topic in the study of positivity in algebraic geometry. 

We quickly recall their definition. Let $L$ be a nef line bundle on a projective variety $X$. 
For a point $x \in X$, The \textit{Seshadri constant} of $L$ at $x$ is defined as
$$\varepsilon(X,L,x):= \inf\limits_{\substack{x \in C}} \frac{L\cdot
C}{{\rm mult}_{x}C} \;\: ,$$
where the infimum is taken over all irreducible and reduced curves $C \subset X$ passing through $x$. Here 
$L\cdot C$ denotes the intersection multiplicity, and ${\rm mult}_x C$ denotes the 
multiplicity of the curve $C$ at $x$. So $\varepsilon(X,L,x)$ depends only the numerical equivalence class of $L$. If the variety is clear from the context, we denote the Seshadri constant simply by $\varepsilon(L,x)$.

By Seshadri's criterion for ampleness, the line bundle $L$ is ample if and only if $\varepsilon(L,x) > 0$ for all $x \in X$. So Seshadri constants of ample line bundles are positive real numbers. 

In order to understand the behaviour of $\varepsilon(L,x)$ as $x$ varies, one defines the following numbers: 
$$\varepsilon(L,1) :=\sup\limits_{x\in X}
\varepsilon(L,x),$$  and 
$$\varepsilon(L) :=\, \inf\limits_{\substack{x \in X}} \varepsilon(L,x)\, .$$

It is known that the values of both $\varepsilon(L,1) $ and $\varepsilon(L)$ are achieved  at specific points. In fact, 
$\varepsilon(L,1)  = \varepsilon(L,x)$ for a \textit{very general} point $x \in X$.

It is easily seen that the following inequalities hold for any point $x \in X$. The dimension of $X$ is denoted by $n$. 
$$ 0 <  \varepsilon(L)\le  \varepsilon(L,x) \le \varepsilon(L,1)  \le \sqrt[n]{L^n}.$$

One is interested in computing the precise value of Seshadri constants, or at least in giving some bounds. 
In general, the larger the Seshadri constant is, the more positive $L$ will be. For example, if $L$ is very ample (in fact, if it is ample and base point free) then $\varepsilon(L,x) \ge 1$ for all $x\in X$. On the other hand, it may happen that $\varepsilon(L,x) < 1$ for some $x$ if the ample line bundle $L$ is not base point free.

Note that the above definition of Seshadri constants is meaningful only when the dimension of $X$ is two or more. Seshadri constants have been extensively studied on surfaces and the general picture is understood well in this case; see \cite{B99,H,BS} for a sample.  
In dimension at least three, a lot is known in specific cases such as abelian varieties and Fano varieties (see \cite{B,L} for example), but the general picture is not well-understood. For a detailed survey, see \cite[Chapter 5]{Laz} and 
\cite{primer}.

In this paper, we compute Seshadri constants of ample line bundles on some flag bundles over complex projective curves. We now recall the construction of the flag bundles that we will study in this paper. 

Let $E$ be a vector bundle on a connected smooth projective curve $X$ defined over $\mathbb{C}$. We will assume that $E$ is not semistable. Let $$0=E_0 \subset E_1 \subset E_2 \subset \cdots \subset E_{d-1} \subset E_d=E$$ be the Harder-Narasimhan filtration of $E$. For every $1 \le j \le d-1$, let $r_j$ denote the rank of $E/E_j$. Then we have $r_1 > r_2 > \cdots >r_{d-1}$. We consider the flag bundle 
\begin{eqnarray}\label{flag-define}
\pi: \text{Flag}(r_{k_1}, r_{k_2}, \cdots , r_{k_{\gamma}}, E) \rightarrow X,
\end{eqnarray}
where $1 \le k_1 < k_2 < \cdots <k_{\gamma} \le d-1$. A point in a fiber over a point $x \in X$ represents successive quotients of the vector space $E_x$ of the following form

\begin{center}
	$E_x \rightarrow V_{k_1} \rightarrow  V_{k_2} \rightarrow \cdots \rightarrow V_{k_{\gamma}},$
\end{center} 
where $\text{dim}(V_{k_i})=r_{k_i}$ for $1 \le i \le \gamma$. We will denote this flag bundle \eqref{flag-define} simply by $\text{Fl}(E)$, since the positive integers $r_{k_1}, r_{k_2}, \cdots , r_{k_{\gamma}}$ are fixed throughout the article. 

The nef cone of the variety $\text{Fl}(E)$ was computed by Biswas and Parameswaran \cite{BP}. See Section \ref{prelims} for a brief description of their results. Using their results, one can explicitly describe the ample line bundles on $\text{Fl}(E)$. In order to compute  the Seshadri constants, we will first describe the cone of effective curves of $\text{Fl}(E)$ in Proposition \ref{curves-cone}. 
This enables us to compute the Seshadri constants. 

In \cite{BHNN}, the authors computed the Seshadri constants of ample line bundles on Grassmann bundles. Our approach to the flag bundle case is motivated by this paper.

In Theorem \ref{main-thm}, we give lower and upper bounds for the Seshadri constants of a nef line bundle $L$ on $\text{Fl}(E)$ in terms of the non-negative 
integers expressing $L$ as a linear combination of the generators of the nef cone of $\text{Fl}(E)$. We then show that 
the lower bound given in Theorem \ref{main-thm} is always achieved at specific points in $\text{Fl}(E)$. We also show that 
under some additional assumptions on the Harder-Narasimhan filtration of $E$, the upper bound 
given in Theorem \ref{main-thm}
is achieved at general points of $\text{Fl}(E)$.
We give some examples in Section \ref{examples}. 

We work throughout over the field $\mathbb{C}$ of complex numbers. The field of real numbers is denoted by $\mathbb{R}$.

\section{Nef cone of $\text{F\lowercase{l}}(E)$}\label{prelims}
In this section we recall the description of the nef cone of $\text{Fl}(E)$ given in \cite{BP}. 

Let $X$ be a nonsingular projective variety. The \textit{N\'eron-Severi group} of $X$ is defined to be the quotient group $$\text{NS}(X)= \text{Div}(X)/ \text{Num}(X),$$ where $\text{Div}(X)$ is the group of divisors on $X$ and $\text{Num}(X)$ is the subgroup of numerically trivial divisors. Then $\text{NS}(X)$ is a finitely generated abelian group. 

The \textit{N\'eron-Severi space} of $X$ is defined to be
$$\text{NS}(X)_{\mathbb{R}} := \text{NS}(X) \otimes_{\mathbb{Z}} \mathbb{R}.$$

The cone in $\text{NS}(X)_{\mathbb{R}}$ generated by all the real nef divisors is called the \textit{nef cone} of $X$. The cone generated by all the ample divisors is called the \textit{ample cone} of $X$.
The \textit{pseudo-effective cone} of $X$ is the closed cone in $\text{NS}(X)_{\mathbb{R}}$ generated by all the effective classes in $\text{NS}(X)_{\mathbb{R}}$. An element of the pseudo-effective cone is called a pseudo-effective divisor. 

A nef divisor is a limit of ample divisors and a pseudo-effective divisor is a limit of effective classes. In particular, a nef divisor is pseudo-effective. 

We will also be interested in the cone of curves in $X$. Let $N_1(X)_{\mathbb{R}}$ denote the vector space of real one-cycles on $X$ modulo numerical equivalence. There is a perfect pairing between $\text{NS}(X)_{\mathbb{R}}$ and 
 $N_1(X)_{\mathbb{R}}$  given by the intersection pairing on $X$. 
 
 The \textit{closed cone of curves} on $X$, denoted $\overline{\text{NE}}(X)$, is the closure of the cone spanned by all
 effective one-cycles on $X$. The nef cone and $\overline{\text{NE}}(X)$ are dual to each other under the intersection pairing. 
 
For more details, see \cite[Section 1.4.C]{Laz}.

We are interested in these cones for the flag bundle $\text{Fl}(E)$. Its nef cone was described by \cite{BP}.

For every $1 \le i \le \gamma$, 
let $$f_i:\text{Gr}_{r_{k_i}}(E) \rightarrow X$$ be the Grassmannian bundle of rank $r_{k_i}$ quotients of the vector bundle $E$. Let $\mathcal{L}_i$ be the pullback $f_i^*(\mathcal{L}')$, where $\mathcal{L'}$ is a line bundle on $X$ of degree 1.

Consider the following sequence of maps.

\begin{equation}
	\text{Fl}(E) \xhookrightarrow{\Phi} \prod_{i=1}^{\gamma} \text{Gr}_{r_{k_i}}(E) \hookrightarrow \prod_{i=1}^{\gamma} \mathbb{P}((\wedge^{r_{k_i}} E)^{*})
\end{equation}

The first embedding $\Phi$  is defined by sending a point  $$Q=(E_x \rightarrow V_{k_1} \rightarrow  V_{k_2} \rightarrow \cdots \rightarrow V_{k_{\gamma}}) \in \text{Fl}(E)$$ to the tuple $$(E_x \rightarrow V_{k_1}, E_x \rightarrow V_{k_2}, \cdots , E_x \rightarrow V_{k_{\gamma}}) \in \prod_{i=1}^{\gamma} \text{Gr}_{r_{k_i}}(E).$$ 
The second embedding is a product of the Pl\"{u}cker embeddings.\\

For every $1 \le i \le \gamma$, define the map 
\begin{eqnarray}\label{phii}
\Phi_i:=\text{Pr}_i \circ \Phi: \text{Fl}(E) \to \text{Gr}_{r_{k_i}}(E),
\end{eqnarray}
where $$\text{Pr}_i : \prod_{i=1}^{\gamma} \text{Gr}_{r_{k_i}}(E) \rightarrow \text{Gr}_{r_{k_i}}(E)$$ 
is the $i$-th projection. Let $\theta_i$
be the degree of the quotient $E/E_{k_i}$. 
Define  $$\omega_i : = \mathcal{O}_{\text{Gr}_{r_{k_i}}(E)}(1) - \theta_i \mathcal{L}_i.$$

Then we have the following results. 

\begin{theorem}\cite[Proposition 4.1]{BP} 
Let $1 \le i \le \gamma$. 
	The divisor $\omega_i$ is nef. Further $\omega_i$ and $\mathcal{L}_i$ generate the nef cone of $\text{Gr}_{r_{k_i}}(E)$.
\end{theorem}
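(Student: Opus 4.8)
The plan is to exploit the fact that, since $X$ is a curve, the Néron--Severi space of $G := \text{Gr}_{r_{k_i}}(E)$ is two-dimensional, spanned by the relative hyperplane class $\mathcal{O}_{G}(1)$ and by $\mathcal{L}_i = f_i^*(\mathcal{L}')$. A pointed full-dimensional closed convex cone in a plane has exactly two extremal rays, so it suffices to exhibit two linearly independent nef classes, each lying on the boundary of the nef cone: these must then be the extremal rays and hence generate it. I would take these two classes to be $\mathcal{L}_i$ and $\omega_i$. That $\mathcal{L}_i$ is nef is immediate, as it is the pullback of an ample class from $X$; it lies on the boundary because it restricts to $0$ on any curve contained in a fibre of $f_i$, so it cannot be ample.

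The main work is to prove that $\omega_i = \mathcal{O}_{G}(1) - \theta_i \mathcal{L}_i$ is nef, i.e. that $\omega_i \cdot C \ge 0$ for every irreducible curve $C \subset G$. If $C$ lies in a fibre of $f_i$ then $\mathcal{L}_i \cdot C = 0$ and $\omega_i \cdot C = \mathcal{O}_{G}(1)\cdot C > 0$, since $\mathcal{O}_{G}(1)$ is relatively ample. Otherwise $f_i|_C$ is finite of some degree $n \ge 1$; letting $\nu : \widetilde{C} \to G$ be the normalization of $C$ and $g = f_i \circ \nu : \widetilde{C} \to X$, the curve corresponds to a rank-$r_{k_i}$ quotient bundle $g^*E \twoheadrightarrow Q$ on $\widetilde{C}$, obtained by restricting the tautological quotient. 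Then $\nu^*\mathcal{O}_{G}(1) = \det Q$ while $\nu^*\mathcal{L}_i$ has degree $n$, so $\omega_i \cdot C = \deg Q - n\,\theta_i$. I would bound $\deg Q$ from below using the Harder--Narasimhan filtration: since $g$ is a finite map of smooth complex curves, $g^*E_\bullet$ is the Harder--Narasimhan filtration of $g^*E$, with $\deg(g^*E_{k_i}) = n\deg E_{k_i}$. The kernel of $g^*E \twoheadrightarrow Q$ is a sub-bundle (it is saturated, being the kernel of a locally free quotient on a smooth curve) of rank $\rank(g^*E_{k_i})$, so by the maximality of the Harder--Narasimhan sub-bundle of its rank its degree is at most $n\deg E_{k_i}$; hence $\deg Q \ge n\deg(E/E_{k_i}) = n\theta_i$ and $\omega_i \cdot C \ge 0$.

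It remains to check that $\omega_i$ lies on the boundary of the nef cone, for which I would use the section $s_i : X \to G$ determined by the Harder--Narasimhan quotient $E \twoheadrightarrow E/E_{k_i}$. Along $s_i(X)$ the tautological quotient restricts to $E/E_{k_i}$, so $\mathcal{O}_{G}(1)\cdot s_i(X) = \theta_i$ while $\mathcal{L}_i\cdot s_i(X) = 1$, giving $\omega_i \cdot s_i(X) = 0$. Thus $\omega_i$ meets an effective curve in degree zero and cannot be ample. Since $\omega_i$ and $\mathcal{L}_i$ are linearly independent nef classes on the boundary of the two-dimensional nef cone, they are its two extremal rays and therefore generate it.

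I expect the principal obstacle to be the degree estimate of the second paragraph: one must know that $E/E_{k_i}$ realizes the minimal degree among all rank-$r_{k_i}$ quotients, which relies on $r_{k_i}$ being a rank occurring in the Harder--Narasimhan filtration (equivalently, that $\rank E_{k_i}$ is a vertex of the Harder--Narasimhan polygon, where the maximal-degree sub-bundle of that rank is exactly $E_{k_i}$), together with the stability of the filtration under pullback along finite covers. Ensuring this behaves correctly even when $C$ is singular --- handled by passing to the normalization and using that the restricted tautological subsheaf is automatically a sub-bundle --- is the technical heart of the argument.
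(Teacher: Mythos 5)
Your proposal is correct, but note that the paper itself does not prove this statement at all: it is quoted verbatim from Biswas--Parameswaran \cite[Proposition 4.1]{BP}, so there is no internal proof to compare against. What you have written is a correct, self-contained proof, and it is essentially the standard argument for nef cones of Grassmann bundles over curves (close in spirit to the one in \cite{BP} and to the related computation in \cite{BHP}). The structure is sound: since $\mathrm{NS}(\text{Gr}_{r_{k_i}}(E))_{\mathbb{R}}$ is two-dimensional, exhibiting two independent nef classes each lying on the boundary (i.e.\ nef but not ample, using that the ample cone is the interior of the nef cone) does pin down the two extremal rays. Your key degree estimate is also right, and you correctly identify where the hypothesis enters: for a subsheaf $S \subseteq g^*E$ with $\rank S = \rank E_{k_i}$, the bound $\deg S \le n \deg E_{k_i}$ holds precisely because $\rank E_{k_i}$ is a vertex of the Harder--Narasimhan polygon of $g^*E$ (one checks this by intersecting $S$ with the filtration $g^*E_{\bullet}$ and using semistability of the graded pieces together with the strict decrease of slopes); for a rank that is not an HN rank the analogous statement would fail, and this is exactly why the flag types in the paper are restricted to the HN ranks. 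The two remaining ingredients you invoke --- that pullback along a finite map of smooth curves in characteristic zero preserves the HN filtration, and that a map $\widetilde{C} \to \text{Gr}_{r_{k_i}}(E)$ over $X$ is the same as a locally free rank-$r_{k_i}$ quotient of $g^*E$ with $\nu^*\mathcal{O}(1) = \det Q$ --- are both standard and correctly applied. What your approach buys over the paper's citation is transparency about the role of Assumption-free HN data; what the citation buys is brevity, and \cite{BP} proves the statement for arbitrary flag bundles, whereas your argument as written is tailored to the single Grassmann bundle $\text{Gr}_{r_{k_i}}(E)$ (which is all this statement needs).
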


Let $\tilde{\omega_i}$ be the pullback $\Phi_i^*(\omega_i)$ and let $\mathcal{L}=\pi^*(\mathcal{L}')$, where $\mathcal{L'}$ is a degree $1$ line bundle on $X$.

\begin{theorem}\cite[Theorem 5.1]{BP} \label{nef-cone}
	The divisors $\tilde{\omega_1}, \ldots, \tilde{\omega_{\gamma}}, \mathcal{L}$ 
	 generate the nef cone of $\text{Fl}(E)$.
\end{theorem}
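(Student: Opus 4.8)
The plan is to exhibit an explicit basis of curves dual to the proposed generators, so that the statement reduces to one intersection computation. First I would record the structure of $\text{NS}(\text{Fl}(E))_{\mathbb{R}}$. Since $\text{Fl}(E) \to X$ is a flag bundle over a curve, a standard Leray--Hirsch type description of the Picard group of a flag bundle shows that $\text{NS}(\text{Fl}(E))_{\mathbb{R}}$ has dimension $\gamma + 1$, with basis the pullbacks $\mathcal{O}_i := \Phi_i^{*}\,\mathcal{O}_{\text{Gr}_{r_{k_i}}(E)}(1)$ for $1 \le i \le \gamma$ together with $\mathcal{L}$. Because $\Phi_i^{*}\mathcal{L}_i = \pi^{*}\mathcal{L}' = \mathcal{L}$, we have $\tilde{\omega_i} = \mathcal{O}_i - \theta_i \mathcal{L}$, so the change of basis from $\{\mathcal{O}_1, \ldots, \mathcal{O}_{\gamma}, \mathcal{L}\}$ to $\{\tilde{\omega_1}, \ldots, \tilde{\omega_{\gamma}}, \mathcal{L}\}$ is unipotent; in particular the latter is again a basis of $\text{NS}(\text{Fl}(E))_{\mathbb{R}}$.

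Second, the inclusion of the cone generated by $\tilde{\omega_1}, \ldots, \tilde{\omega_{\gamma}}, \mathcal{L}$ into the nef cone is immediate: $\mathcal{L} = \pi^{*}\mathcal{L}'$ is the pullback of a degree one, hence ample, divisor on $X$, and $\tilde{\omega_i} = \Phi_i^{*}\omega_i$ is the pullback of the nef divisor $\omega_i$ supplied by the previous theorem, so both are nef and their convex combinations are nef.

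The substance is the reverse inclusion, which I would obtain by producing curves dual to the generators. For the class detecting $\mathcal{L}$ I would use the Harder--Narasimhan filtration itself: the surjections $E \to E/E_{k_i}$ are compatible with the flag structure and hence define a canonical section $\sigma \colon X \to \text{Fl}(E)$; set $C_0 = \sigma(X)$. Since $\sigma$ is a section, $\mathcal{L}\cdot C_0 = \deg \mathcal{L}' = 1$. The key computation is $\tilde{\omega_i}\cdot C_0 = 0$: the composite $\Phi_i \circ \sigma$ is the section of $\text{Gr}_{r_{k_i}}(E)$ classifying $E \to E/E_{k_i}$, which pulls $\mathcal{O}_{\text{Gr}_{r_{k_i}}(E)}(1)$ back to $\det(E/E_{k_i})$ of degree $\theta_i$ and pulls $\mathcal{L}_i$ back to $\mathcal{L}'$ of degree $1$, so $\omega_i$ restricts to degree $\theta_i - \theta_i = 0$. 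This cancellation is exactly what the correction term $\theta_i \mathcal{L}_i$ in the definition of $\omega_i$ is engineered to produce, and it is the main obstacle in the sense that it is the one place where the arithmetic of the filtration genuinely enters; the remaining inputs are formal. For the classes detecting the $\tilde{\omega_i}$ I would work inside a single fiber $F = \pi^{-1}(x)$, a partial flag variety, and take for each $i$ a Schubert line $\ell_i \subset F$ that moves only the $i$-th step of the flag. The standard Mori/nef duality for flag varieties gives $\mathcal{O}_j|_F \cdot \ell_i = \delta_{ij}$, while $\mathcal{L}\cdot \ell_i = 0$ as $\mathcal{L}$ is pulled back from $X$; hence $\tilde{\omega_j}\cdot \ell_i = \delta_{ij}$.

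Finally I would assemble the conclusion. The intersection matrix of $\{\tilde{\omega_1}, \ldots, \tilde{\omega_{\gamma}}, \mathcal{L}\}$ against the effective curves $\{\ell_1, \ldots, \ell_{\gamma}, C_0\}$ is the identity. Writing any nef divisor as $D = \sum_{i=1}^{\gamma} a_i \tilde{\omega_i} + b\,\mathcal{L}$, which is possible since these form a basis, we read off $a_i = D\cdot \ell_i \ge 0$ and $b = D\cdot C_0 \ge 0$ from nefness. Thus $D$ lies in the cone generated by $\tilde{\omega_1}, \ldots, \tilde{\omega_{\gamma}}, \mathcal{L}$, and combined with the nefness of the generators this shows that the nef cone of $\text{Fl}(E)$ is exactly this cone.
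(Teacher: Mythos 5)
Your proposal is correct, but note its relationship to the source: the paper does not prove this statement at all --- it is imported verbatim from \cite{BP}, so any argument you give is necessarily a different route from "the paper's proof." What is striking is that your route is exactly the dual of the one argument the paper \emph{does} write down, namely the proof of Proposition \ref{curves-cone}. There, the authors take the nef cone description as known from \cite{BP}, construct the fiber lines $L_i$ and the Harder--Narasimhan section $s(X)$ (the same curves you call $\ell_i$ and $C_0$, yours invoked via standard facts on Schubert lines, theirs built explicitly with a basis of $E_x$), compute the same identity intersection matrix ($\tilde{\omega_j}\cdot L_i = \delta_{ij}$, $\tilde{\omega_i}\cdot s(X)=0$, $\mathcal{L}\cdot L_i = 0$, $\mathcal{L}\cdot s(X)=1$), and conclude by cone duality that these curves generate $\overline{\text{NE}}(\text{Fl}(E))$. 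You run the identical computation in the reverse direction: taking as input only the nefness of $\omega_i$ on $\text{Gr}_{r_{k_i}}(E)$ (this is \cite[Proposition 4.1]{BP}, quoted separately in Section \ref{prelims}, so there is no circularity) plus the Leray--Hirsch fact that $\Phi_1^*\mathcal{O}_{\text{Gr}_{r_{k_1}}(E)}(1),\ldots,\Phi_\gamma^*\mathcal{O}_{\text{Gr}_{r_{k_\gamma}}(E)}(1),\mathcal{L}$ form a basis of $\text{NS}(\text{Fl}(E))_{\mathbb{R}}$, you read off the coefficients of a nef class against the effective dual curves and conclude. Your supporting computations are right: $\Phi_i^*\mathcal{L}_i=\pi^*\mathcal{L}'=\mathcal{L}$ makes the change of basis unipotent, and the section pulls $\mathcal{O}_{\text{Gr}_{r_{k_i}}(E)}(1)$ back to $\det(E/E_{k_i})$, giving the cancellation $\theta_i-\theta_i=0$. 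What your approach buys: a self-contained proof that delivers Theorem \ref{nef-cone} and Proposition \ref{curves-cone} simultaneously, since dual simplicial cones on dual bases come for free once the intersection matrix is the identity. What it costs: you must actually justify the spanning statement for $\text{NS}(\text{Fl}(E))_{\mathbb{R}}$, a point the paper never has to address because it quotes the cone description rather than reconstructing it; that fact is standard for flag bundles over curves, but in a referee's eyes it is the one step you should cite or prove rather than wave at.
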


\begin{remark}
The description of the nef cone of $\text{Fl}(E)$ given in \cite{BP} is valid in general for any flag bundle over $X$. We consider only flags with dimensions of successive quotients determined by the Harder-Narasimhan filtration of $E$ since this is needed for later computations. 
\end{remark}

\section{Effective cone of curves on $\text{Fl}(E)$}\label{cone-curves}

In this section, we describe some curves in $\text{Fl}(E)$ which are dual to the generators of the nef cone given in Theorem \ref{nef-cone}.  


Let $x \in X$ be an arbitrary point.  
We claim that there exist smooth rational curves $L_1,\ldots, L_{\gamma}$
in $\text{Fl}(E)$  satisfying the following properties:\\

(1) Each $L_i$ are contained in the fiber $\pi^{-1}(x)=\text{Fl}(E_x)$.

(2) $L_i$ are lines in the following sense: for every $1 \le i \le \gamma$, the image $\Phi_i(L_i)$ inside the fiber $\text{Gr}_{r_{k_i}}(E_x)$ is a line with respect to the Pl\"{u}cker embedding.

(3) $\Phi_j(L_i)$  is a point for every $j \ne i$. 

The construction is as follows. 
Let $n$ denote the rank of $E$. 
Fix a $\complex$--basis $\{e_1, e_2, \cdots, e_n\}$ of the fiber $E_x$. 
For every  $1\le j \le \gamma$, set $s_{k_j}:= n - r_{k_{\gamma -j+1}}$. 

Now let $1\le i \le \gamma$.  For every 
$[t_1: t_2] \in \mathbb{P}^1$, 
define a flag of subspaces of $E_x$ 
\begin{equation}\label{flag}
E_x \supset V_{k_1} \supset V_{k_2} \supset \cdots \supset V_{k_i}[t_1, t_2] \supset \cdots \supset V_{k_{\gamma}}
\end{equation}
as follows: 
\begin{eqnarray*}
	V_{k_j}&:=&\mathbb{C}\langle e_1, e_2, \cdots, e_{s_{k_j}}\rangle,  \text{ for }  j < i\\
	V_{k_i}[t_1, t_2] &:=& \mathbb{C}\langle e_1 + e_2, e_2 + e_3, \cdots, e_{s_{k_i}-1} + e_{s_{k_i}}, t_1 e_{s_{k_i}} + t_2 e_{s_{{k_i}+1}}\rangle\\
	V_{k_j} &:=& \mathbb{C}\langle e_1 + e_2, e_2 + e_3, \cdots, e_{s_{k_{j}}} + e_{s_{k_j}+1}\rangle,  \text{ for } j >i
\end{eqnarray*}

Note that $\text{dim}(V_{k_j}) = s_{k_j}= n - r_{k_{\gamma -j +1}}$ for all $1\le j\le \gamma$. 

The set of all flags of subspaces of $E_x$ as in \eqref{flag}
will be denoted by $L_i$ and it is a subset of  $\text{Fl}(E_x)$, and hence of $\text{Fl}(E)$.

From the construction it follows that $\Phi_j(L_i)$ is the constant point $E_x \supset V_{k_j}$ for $j \ne i$. Let $\Phi_i(L_i)$ be the image in $\text{Gr}_{r_{k_i}}(E_x)$. It follows from the construction that the image of $\Phi_i(L_i)$ under the Pl\"{u}cker embedding inside $\mathbb{P}(\wedge^{r_{k_i}} E_x)$ has dimension one and it is defined by linear homogeneous polynomials. Hence the image is a line which gives a variety structure on $L_i$ as a subvariety of $\text{Fl}(E)$.    

\begin{remark}
	The lines $L_i$ in the above construction can be defined functorially. The part of the subspace $V_{k_i}[t_1, t_2]$ that depends on $[t_1:t_2]$ is the line
 \[
 \{t_1 e_{s_{k_i}} + t_2 e_{s_{k_i}+1}\} \subseteq \mathbb{C}\langle e_{s_{k_i}}, e_{s_{k_i}+1}\rangle. 
 \]
 
 We have an isomorphism $\mathbb{C}\langle e_{s_{k_i}}, e_{s_{k_i}+1}\rangle  \cong \mathbb{C}^2$ by sending 
 $$e_{s_{k_i}} \mapsto (1, 0), \; \;\;\; e_{s_{k_{i+1}}} \mapsto (0, 1).$$ 
 
 We have the family of lines passing through the origin parameterized by $\mathbb{P}^1$ 
 \[
 \{t_1 \cdot (1, 0) + t_2 \cdot (0, 1)\} \subseteq \mathbb{C}^2.
 \]
 More explicitly, the family is the following
 \[
 \{((x, y), (t_1, t_2)) : t_1 \cdot y = t_2 \cdot x\} \subseteq \mathbb{C}^2 \times \mathbb{P}^1.
 \]
 
 This is precisely the tautological line bundle $\mathcal{O}_{\mathbb{P}^1}(-1) \subseteq \mathcal{O}_{\mathbb{P}^1}^{\oplus 2}$. Thus the subspaces $V_{k_i}$ will define the  vector bundle $\mathcal{O}_{\mathbb{P}^1} \oplus \mathcal{O}_{\mathbb{P}^1} \oplus \cdots \oplus \mathcal{O}_{\mathbb{P}^1} \oplus \mathcal{O}_{\mathbb{P}^1}(-1)$ of rank $s_{k_i}$ over $\mathbb{P}^1$. We have the following filtration of vector bundles over $\mathbb{P}^1$:
 \[
 \mathcal{O}_{\mathbb{P}^1}^{\oplus n} \supset \mathcal{O}_{\mathbb{P}^1}^{\oplus s_{k_1}} \supset \cdots \supset \mathcal{O}_{\mathbb{P}^1}^{\oplus s_{k_i} -1 } \oplus \mathcal{O}_{\mathbb{P}^1}(-1) \supset \mathcal{O}_{\mathbb{P}^1}^{\oplus s_{k_{i+1}}} \supset \cdots \supset \mathcal{O}_{\mathbb{P}^1}^{\oplus s_{k_{\gamma}}}, 
 \]
 where the embedding $\mathcal{O}_{\mathbb{P}^1}^{\oplus s_{k_{i+1}}} \subset \mathcal{O}_{\mathbb{P}^1}^{\oplus s_{k_i} -1 } \oplus \mathcal{O}_{\mathbb{P}^1}(-1)$ is given by $(\text{id}, 0)$. 
 
 The embedding $\mathcal{O}_{\mathbb{P}^1}^{\oplus s_{k_i} -1 } \oplus \mathcal{O}_{\mathbb{P}^1}(-1) \subset \mathcal{O}_{\mathbb{P}^1}^{\oplus s_{k_{i-1}} }$ is the following natural embedding 
 \[
 \mathbb{P}^1 \times \big(\mathbb{C}\langle e_1 + e_2, e_2 + e_3, \cdots, e_{s_{k_i} -1} + e_{s_{k_i}}\rangle \big) \oplus \mathbb{C}\langle t_1 e_{s_{k_i}} + t_2 e_{s_{k_i+1}}\rangle  \subseteq \mathbb{P}^1 \times \big(\mathbb{C}\langle e_1, e_2, \cdots, e_{s_{k_{i-1}}}\rangle \big),
 \]
 where $s_{k_{i-1}} > s_{k_i}$. The above filtration will define a unique map
 \[
 \mathbb{P}^1 \rightarrow \text{Fl}(E_x)
 \]
 and the image of this map is exactly $L_i$ which was defined above. 
\end{remark}
We recall that the Harder-Narasimhan filtration of $E$ is given by
$$0=E_0 \subset E_1 \subset E_2 \subset \cdots \subset E_{d-1} \subset E_d=E.$$ The rank of $E/E_{k_i}$ is $r_{k_i}$. So $E/E_{k_{i+1}}=(E/E_{k_i})/(E_{k_{i+1}}/E_{k_i})$. This gives the following sequence of quotients over $X$: 
\begin{equation}
\label{section}
	E \rightarrow E/E_{k_1} \rightarrow E/E_{k_2} \rightarrow \cdots \rightarrow E/E_{k_{\gamma}}.
\end{equation} 

This defines a section $s:X \rightarrow \text{Fl}(E)$.

\begin{proposition}\label{curves-cone}
	The curves $L_1, \ldots, L_{\gamma}, s(X)$ generate the closed cone of curves $\overline{\text{NE}}(\text{Fl}(E))$ of Fl$(E)$.
\end{proposition}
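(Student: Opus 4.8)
The plan is to prove this by a dimension-count and duality argument. Since the nef cone and the closed cone of curves $\overline{\text{NE}}(\text{Fl}(E))$ are dual under the intersection pairing, and Theorem \ref{nef-cone} tells us the nef cone is generated by $\widetilde{\omega_1},\ldots,\widetilde{\omega_\gamma},\mathcal{L}$, it suffices to show that $\overline{\text{NE}}(\text{Fl}(E))$ is a simplicial cone of dimension $\gamma+1$ spanned by the given curve classes, and that these classes pair correctly against the nef generators.

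Let me think about what I actually need.

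First: the dimension of $N_1(\text{Fl}(E))_{\mathbb{R}}$. The nef cone lives in $\text{NS}(\text{Fl}(E))_{\mathbb{R}}$, and is generated by $\gamma+1$ elements. If these $\gamma+1$ nef generators are linearly independent, then $\text{NS}_{\mathbb{R}}$ has dimension $\geq \gamma+1$, and since the nef cone is full-dimensional (it has nonempty interior = ample cone), $\dim \text{NS}_{\mathbb{R}} = \gamma+1$. By duality $\dim N_1 = \gamma+1$ too. So I have $\gamma+1$ candidate curves and I'm in a $(\gamma+1)$-dimensional space.

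Second: the intersection numbers. This is the heart. I need to compute the $(\gamma+1)\times(\gamma+1)$ matrix of intersection numbers of the curves $L_1,\ldots,L_\gamma, s(X)$ against the nef generators $\widetilde{\omega_1},\ldots,\widetilde{\omega_\gamma},\mathcal{L}$, and show it has the right structure (e.g. is invertible, block-triangular). By property (3), $\Phi_j(L_i)$ is a point for $j\neq i$, so $\widetilde{\omega_j}\cdot L_i = \Phi_j^*(\omega_j)\cdot L_i = \omega_j \cdot \Phi_{j*}(L_i) = 0$ for $j\neq i$. By property (2), $\Phi_i(L_i)$ is a Plücker line, so $\widetilde{\omega_i}\cdot L_i$ should be computable from $\omega_i = \mathcal{O}(1) - \theta_i\mathcal{L}_i$ — and since $L_i$ is in a fiber, $\mathcal{L}_i\cdot L_i = 0$ and $\mathcal{O}(1)\cdot L_i = 1$, giving $\widetilde{\omega_i}\cdot L_i = 1$. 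Also $\mathcal{L}\cdot L_i = 0$ since $L_i$ is in a fiber. For the section, $\mathcal{L}\cdot s(X) = \deg \mathcal{L}' = 1$, and I'd compute $\widetilde{\omega_i}\cdot s(X)$ using that $s$ picks out the canonical quotient $E\to E/E_{k_i}$.

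**Then I would** argue as follows. The matrix is lower/upper triangular with the $L_i$ block being diagonal (each $\widetilde{\omega_i}\cdot L_i = 1$, off-diagonal entries zero) and $s(X)$ pairing to $1$ against $\mathcal{L}$. This matrix is invertible, proving the $\gamma+1$ curve classes are linearly independent, hence span $N_1(\text{Fl}(E))_{\mathbb{R}}$. To conclude they generate the \emph{cone} (not just the space), I invoke duality: a class lies in $\overline{\text{NE}}$ iff it pairs non-negatively against all nef classes; conversely the extremal rays of $\overline{\text{NE}}$ are cut out by the facets of the nef cone. Since the nef cone is the \emph{simplicial} cone on $\gamma+1$ linearly independent generators, its dual is also simplicial on $\gamma+1$ rays, and I match each such dual ray to one of my explicit curves by checking that each curve pairs to zero against exactly the right $\gamma$ of the nef generators and positively against the remaining one.

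**The main obstacle** will be verifying two points carefully. First, that the nef generators $\widetilde{\omega_1},\ldots,\widetilde{\omega_\gamma},\mathcal{L}$ are genuinely linearly independent (so the nef cone really is simplicial of full dimension $\gamma+1$) rather than merely spanning — this is where I must be sure that the pullbacks $\Phi_i^*$ do not collapse the classes, and the clean intersection matrix with the $L_i$ and $s(X)$ is exactly the tool that forces independence. Second, and more delicate, is controlling \emph{all} the intersection numbers $\widetilde{\omega_i}\cdot s(X)$: I need to confirm these are finite and that $s(X)$ together with the $L_i$ land on the extremal rays dual to the \emph{facets} of the nef simplex, so that no effective curve class escapes the cone they generate. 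Establishing that each generator sits on an honest extremal ray (equivalently, annihilates a codimension-one face of the nef cone) is the real content; the individual intersection computations via projection formula are routine once properties (1)--(3) and the explicit section \eqref{section} are in hand.
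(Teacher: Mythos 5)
Your proposal is correct and follows essentially the same route as the paper: both arguments exhibit the curves $L_1,\ldots,L_{\gamma},s(X)$ as dual to the nef generators $\tilde{\omega_1},\ldots,\tilde{\omega_{\gamma}},\mathcal{L}$ and conclude by cone duality. The one computation you defer, $\tilde{\omega_i}\cdot s(X)$, is exactly what the paper verifies to be zero (via $s'(X)\cdot \omega_i = \theta_i - \theta_i = 0$, where $s'$ is the section of $\text{Gr}_{r_{k_i}}(E)$ induced by the quotient $E \to E/E_{k_i}$), which makes the intersection matrix the identity and completes your argument.
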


\begin{proof}
	To prove the claim, we will show that the curves $L_1,\ldots, L_{\gamma}, s(X)$ are dual to the generators 
	$\tilde{\omega_1},\ldots, \tilde{\omega}_{\gamma},\mathcal{L}$ of the nef cone of $\text{Fl}(E)$. 
	
	We have the following commutative diagram
	
	\[ \begin{tikzcd}
	L_i \arrow[r, hook, "h"] \arrow[swap]{d}{(\Phi_i)_{res}} & \text{Fl}(E) \arrow{d}{\Phi_i} \\%
	\Phi_i(L_i) \arrow[r, hook, "h'"]& \text{Gr}_{r_{k_i}}(E)
	\end{tikzcd}
	\]

Note that $(\Phi_i)_{res}$ is an isomorphism. Indeed, the following morphism is the inverse of $(\Phi_i)_{res}$ when restricted to $\Phi_i(L_i)$
 \[
 \text{Gr}_{r_{k_i}}(E_x) \rightarrow \prod_{i=1}^{\gamma} \text{Gr}_{r_{k_i}}(E_x)
 \]
 \[
 z  \mapsto \bigg\{[E_x \twoheadrightarrow V_{k_1}], \cdots, [E_x \twoheadrightarrow V_{k_{i-1}}], z, [E_x \twoheadrightarrow V_{k_{i+1}}], \cdots, [E_x \twoheadrightarrow V_{k_{\gamma}}]\bigg\} 
 \]

Hence we have: 

\begin{eqnarray*} L_i \cdot \tilde{\omega_i}&=&L_i \cdot \Phi_i^*(\omega_i)\\&=&\text{deg}\left((\Phi_i \circ h)^*(\omega_i)\right)\\&=& \text{deg}\left((h' \circ (\Phi_i)_{res})^*(\omega_i)\right) \\&=&\text{deg}\left( (h')^*(\omega_i)\right) \\&=&\Phi_i(L_i) \cdot \omega_i\\&=&1.\end{eqnarray*}

Similarly, using the following commutative diagram 
	
	\[ \begin{tikzcd}
	L_i \arrow[r, hook, "h"] \arrow[swap]{d}{(\Phi_{i'})_{res}} & \text{Fl}(E) \arrow{d}{\Phi_{i'}} \\%
	\{\text{pt}\} \arrow[r, hook, "h'"]& \text{Gr}_{r_{k_i}}(E)
	\end{tikzcd}
	\]
	we can see that $L_i \cdot \tilde{\omega_{i'}}=0$ for $i \ne i'$.\\
	
	 Since the curve $L_i$ is contained in a fiber $\pi^{-1}(x)$ for some $x \in X$, the composition map $L_i \hookrightarrow \text{Fl}(E) \rightarrow X$ is a constant map. Thus $L_i \cdot \mathcal{L}=0$ for all $i$. \\
	
	The sequence of quotients $\eqref{section}$ defines the section $s(X)$ and the quotient $E \rightarrow E/E_{k_i}$ defines a section $s'(X)$ of $f_i: \text{Gr}_{r_{k_i}}(E) \rightarrow X$. Thus the image $\Phi_i(s(X))$ is precisely $s'(X)$. So we have the following commutative diagram:
	
		\[ \begin{tikzcd}
	s(X) \arrow[r, hook, "s"] \arrow[swap]{d}{(\Phi_{i})_{res}} & \text{Fl}(E) \arrow{d}{\Phi_{i}} \\%
	s'(X) \arrow[r, hook, "s'"]& \text{Gr}_{r_{k_i}}(E)
	\end{tikzcd}
	\]
	
	Then $$s(X) \cdot \tilde{\omega_i}=\text{deg}\left(s^*  (\Phi_i^*(\omega_i))\right)=(s' \circ (\Phi_{i})_{res})^*(\omega_i)=s'(X) \cdot \omega_i,$$  since $(\Phi_{i})_{res}:s(X) \rightarrow s'(X)$ is an isomorphism. 
	
	So  $$s'(X) \cdot \omega_i=(\mathcal{O}_{\text{Gr}_{r_{k_i}}(E)}(1) - \theta_i \mathcal{L}_i) \cdot s'(X)=\theta_i-\theta_i=0.$$
	
	Let $\pi_{\text{res}}: s(X) \hookrightarrow \text{Fl}(E) \rightarrow X$ be the composition map. Then $\pi_{res}$ is an isomorphism with inverse given by the section map $s:X \rightarrow \text{Fl}(E)$. We then see $$s(X) \cdot \mathcal{L}= \text{deg}(\pi_{res}^*(\mathcal{L'})) = \text{deg}(\mathcal{L'})=1.$$
	 \end{proof}

\section{Seshadri constants of line bundles on $\text{Fl}(E)$} \label{sc}

In this section, we prove our main results on Seshadri constants of nef line bundles on $\text{Fl}(E)$. 

By Proposition \ref{nef-cone}, a nef line bundle $L$ on $\text{Fl}(E)$ is of the form $$a_1 \tilde{\omega_1} + a_2 \tilde{\omega_2} + \cdots + a_{\gamma} \tilde{\omega_{\gamma}} +  b \mathcal{L},$$ for some non-negative integers $a_1,\ldots, a_{\gamma}, b$. We denote the line bundle $L$ simply by the tuple $(a_1, a_2, \cdots, a_{\gamma}, b)$. 

By Proposition \ref{curves-cone}, an effective 
curve $C$ in $\overline{\text{NE}}(\text{Fl}(E))$ is of the form 
$$p_1 L_1 + p_2 L_2 + \cdots + p_{\gamma} L_{\gamma} + r s(X),$$ for 
some non-negative integers $p_1,\ldots, p_{\gamma},r$. We denote the curve $C$ simply 
by the tuple $(p_1, p_2, \cdots, p_{\gamma}, r)$.

\begin{lemma}\label{curves}
	For each point $y$ in $\text{Fl}(E)$ and for every $1 \le i \le \gamma$,
	there exist smooth curves $D_i \subset \text{Fl}(E)$ passing through $y$ which are numerically equivalent to the curve $L_i$.
\end{lemma}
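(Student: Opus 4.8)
The plan is to realize each required curve $D_i$ inside the fiber through $y$ by the same pencil construction that produced $L_i$, but adapted to the flag determined by $y$ rather than to a fixed basis. Set $x := \pi(y)$, so that $y \in \pi^{-1}(x) = \text{Fl}(E_x)$. The point $y$ is a flag of quotients of $E_x$, equivalently a flag of kernels
$$W_1 \subset W_2 \subset \cdots \subset W_{\gamma} \subset E_x, \qquad W_i := \ker\big(E_x \twoheadrightarrow \Phi_i(y)\big), \quad \dim W_i = n - r_{k_i},$$
where the inclusions are strict because $r_{k_1} > r_{k_2} > \cdots > r_{k_{\gamma}}$.

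Fix $1 \le i \le \gamma$. I would build $D_i$ by letting the single kernel $W_i$ move in a pencil while freezing all the others at their values for $y$. Concretely, choose subspaces $A \subset B$ of $E_x$ with $\dim A = (n - r_{k_i}) - 1$, $\dim B = (n - r_{k_i}) + 1$, and
$$W_{i-1} \subseteq A \subseteq W_i \subseteq B \subseteq W_{i+1}$$
(omitting the condition on $W_0$ when $i = 1$ and the one on $W_{\gamma+1}$ when $i = \gamma$). Such $A$ and $B$ exist exactly because the dimension gaps are positive: $\dim W_{i-1} = n - r_{k_{i-1}} \le \dim W_i - 1$ and $\dim W_{i+1} = n - r_{k_{i+1}} \ge \dim W_i + 1$, with the boundary cases handled by $r_{k_1} < n$ (so $\dim A \ge 0$ when $i=1$) and $r_{k_\gamma} > 0$ (so $\dim B \le n$ when $i = \gamma$). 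I then set
$$D_i := \left\{\, \big(E_x \twoheadrightarrow E_x/W_1,\ \ldots,\ E_x \twoheadrightarrow E_x/P,\ \ldots,\ E_x \twoheadrightarrow E_x/W_{\gamma}\big) \ :\ A \subseteq P \subseteq B,\ \dim P = n - r_{k_i} \,\right\} \subseteq \text{Fl}(E_x).$$
The chain $W_{i-1} \subseteq A \subseteq P \subseteq B \subseteq W_{i+1}$ shows every such tuple is a genuine flag, and the member $P = W_i$ is exactly $y$, so $D_i$ is a curve in the fiber passing through $y$.

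It then remains to identify the class of $D_i$. The family $\{A \subseteq P \subseteq B\}$ is the standard pencil of subspaces with fixed $A \subseteq P \subseteq B$, so $\Phi_i(D_i)$ is a line in $\text{Gr}_{r_{k_i}}(E_x)$ under the Pl\"{u}cker embedding and $\Phi_i$ restricts to an isomorphism $D_i \xrightarrow{\sim} \Phi_i(D_i)$; in particular $D_i \cong \mathbb{P}^1$ is smooth. For $j \ne i$ the projection $\Phi_j(D_i)$ is the constant quotient $E_x \twoheadrightarrow E_x/W_j$, and $D_i$ lies in a single fiber of $\pi$. Thus $D_i$ satisfies exactly the incidence properties (1)--(3) used to define $L_i$, and the intersection computations in the proof of Proposition~\ref{curves-cone} transfer word for word to give $D_i \cdot \tilde{\omega_i} = 1$, $D_i \cdot \tilde{\omega_{i'}} = 0$ for $i' \ne i$, and $D_i \cdot \mathcal{L} = 0$. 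Since $\tilde{\omega_1}, \ldots, \tilde{\omega_{\gamma}}, \mathcal{L}$ span $\text{NS}(\text{Fl}(E))_{\mathbb{R}}$ (being generators of the full-dimensional nef cone), the perfect pairing between $\text{NS}(\text{Fl}(E))_{\mathbb{R}}$ and $N_1(\text{Fl}(E))_{\mathbb{R}}$ forces the class $D_i - L_i$, which pairs to zero against this spanning set, to be numerically trivial. Hence $D_i \equiv L_i$.

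The only step requiring genuine care is the construction of the auxiliary pencil: one must fit the flag $A \subset B$ strictly between the neighbouring kernels $W_{i-1}$ and $W_{i+1}$ of $y$ while still sandwiching $W_i$, and separately treat the two extreme indices. All of this is controlled by the strict rank inequalities of the Harder--Narasimhan filtration together with $0 < r_{k_\gamma}$ and $r_{k_1} < n$, so no real obstruction arises; the numerical identification is then immediate from Proposition~\ref{curves-cone}.
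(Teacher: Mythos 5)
Your proof is correct, but it reaches the conclusion by a different mechanism than the paper. The paper's proof is a homogeneity argument: since $\mathrm{GL}(E_x)$ acts transitively on $\mathrm{Fl}(E_x)$, one picks $g \in \mathrm{GL}(E_x)$ carrying the point $L_i[0:1]$ to $y$ and sets $D_i := g \cdot L_i$; numerical equivalence is then asserted because $L_i$ and $D_i$ differ by a linear automorphism of the fiber (implicitly, because $\mathrm{GL}(E_x)$ is connected, so the translate is algebraically, hence numerically, equivalent to $L_i$). You instead build $D_i$ by hand as the pencil $\{A \subseteq P \subseteq B\}$ sandwiched between the kernels $W_{i-1}, W_{i+1}$ of the flag $y$, verify it has the same incidence properties (1)--(3) as $L_i$, and deduce $D_i \equiv L_i$ by pairing against $\tilde{\omega_1}, \ldots, \tilde{\omega_{\gamma}}, \mathcal{L}$, which span $\mathrm{NS}(\mathrm{Fl}(E))_{\mathbb{R}}$ because they generate the full-dimensional nef cone. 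Your route is longer but more self-contained: it re-uses only the intersection computations already done in Proposition \ref{curves-cone} plus duality, and it sidesteps the point the paper glosses over (why an automorphism-translate has unchanged intersection numbers against ambient divisor classes, which really needs connectedness of the group). The paper's route is shorter and requires no new construction, at the cost of leaning on that group-theoretic invariance. Your attention to the boundary cases ($r_{k_1} < n$ so $\dim A \ge 0$ when $i=1$, and $r_{k_{\gamma}} \ge 1$ so $\dim B \le n$ when $i = \gamma$) is a genuine point of care that the paper's construction of the original $L_i$ relies on implicitly as well; both checks follow from $1 \le k_1$ and $k_{\gamma} \le d-1$ in the Harder--Narasimhan filtration, exactly as you say.
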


\begin{proof}
	Let $\pi(y) = x$ in $X$. The lines $L_i$ constructed at the beginning of Section \ref{cone-curves} are in the fiber $\text{Fl}(E_x)$ over $x$. 
	
	The group $\text{GL}(E_x)$ acts transitively on $\text{Fl}(E_x)$. Let $L_i[0:1]$ be the point on the line $L_i$  corresponding to $[0:1] \in \mathbb{P}^1$. Let $g$ be in $\text{GL}(E_x)$ such that $g\cdot L_i[0:1] = y$.  
	
	Let $D_i$ be the image $g \cdot L_i$ of $L_i$ under the linear automorphism $g$ of $\text{Fl}(E_x)$. For any divisor $D$ in $\text{Fl}(E)$, we will consider the divisor $D' = D \cdot \text{Fl}(E_x)$ in $\text{Fl}(E_x)$. Since $L_i$ and $D_i$ are isomorphic as subschemes of $\text{Fl}(E_x)$ by a linear automorphism of $\text{Fl}(E_x)$, 
	we have $L_i \cdot D' = D_i \cdot D'$.
\end{proof}

\begin{theorem}\label{main-thm}
Let $X$ be a smooth complex projective curve and let $E$ be a vector bundle on $X$ which is not semistable. Let 
$\text{Fl}(E)$ be the flag bundle as in \eqref{flag-define}. 
	\label{inequalitytheorem}

	Let $L = (a_1, a_2, \cdots, a_{\gamma}, b)$ be a nef line bundle on $\text{Fl}(E)$ expressed in terms of the generators of the nef cone given in Proposition \ref{nef-cone}. Then the Seshadri constants of $L$ at any point 
	$y \in \text{Fl}(E)$ satisfy the following inequalities: 
	\begin{center}
		$\min(a_1, a_2, \cdots, a_{\gamma}, b) \le \varepsilon(L, y) \le \min(a_1, a_2, \cdots, a_{\gamma})$.
	\end{center}
\end{theorem}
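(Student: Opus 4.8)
The plan is to bound the Seshadri constant by combining the intersection data from Proposition \ref{curves-cone} with the duality between the nef cone and the cone of curves. First I would establish the upper bound, which is the easier of the two. Since $\varepsilon(L,y) = \inf_C \frac{L\cdot C}{\mathrm{mult}_y C}$ over irreducible curves through $y$, it suffices to exhibit, for each index $i$, a single curve through $y$ that yields the ratio $a_i$. By Lemma \ref{curves}, for every $1 \le i \le \gamma$ there is a smooth curve $D_i$ through $y$ numerically equivalent to $L_i$; since $D_i$ is smooth, $\mathrm{mult}_y D_i = 1$. The intersection computations in Proposition \ref{curves-cone} give $L_i \cdot \tilde{\omega_i} = 1$, $L_i \cdot \tilde{\omega}_{i'} = 0$ for $i' \ne i$, and $L_i \cdot \mathcal{L} = 0$, so $L \cdot D_i = L \cdot L_i = a_i$. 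Taking the infimum over these $\gamma$ curves forces $\varepsilon(L,y) \le \min(a_1,\ldots,a_{\gamma})$.

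For the lower bound I would argue that every irreducible curve $C$ through $y$ satisfies $\frac{L \cdot C}{\mathrm{mult}_y C} \ge \min(a_1,\ldots,a_{\gamma},b)$. Write the numerical class of $C$ in terms of the generators of $\overline{\text{NE}}(\text{Fl}(E))$ as $(p_1,\ldots,p_{\gamma},r)$ with $p_j, r \ge 0$, using Proposition \ref{curves-cone}. Pairing against $L = (a_1,\ldots,a_{\gamma},b)$ via the dual intersection table from Proposition \ref{curves-cone} gives $L \cdot C = \sum_j a_j p_j + b r$, so that
\begin{equation*}
L \cdot C \ge \min(a_1,\ldots,a_{\gamma},b)\Big(\sum_{j} p_j + r\Big).
\end{equation*}
The crux is then to show $\sum_j p_j + r \ge \mathrm{mult}_y C$. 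This I expect to be the main obstacle, and I would handle it by controlling the multiplicity geometrically: the $L_i$ are lines in fibers while $s(X)$ maps isomorphically to the base, so one should relate $\mathrm{mult}_y C$ to the total ``line content'' of $C$ recorded by the coefficients $p_j, r$. Concretely, I would intersect $C$ with an effective divisor class that separates the fiber directions from the horizontal direction, and bound the local multiplicity at $y$ by the sum of the $p_j$ and $r$, much as a plane curve of degree $p$ has multiplicity at most $p$ at any point.

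Combining the two displays and the multiplicity bound yields
\begin{equation*}
\frac{L \cdot C}{\mathrm{mult}_y C} \ge \min(a_1,\ldots,a_{\gamma},b)\,\frac{\sum_j p_j + r}{\mathrm{mult}_y C} \ge \min(a_1,\ldots,a_{\gamma},b),
\end{equation*}
and taking the infimum over $C$ gives $\varepsilon(L,y) \ge \min(a_1,\ldots,a_{\gamma},b)$, completing the proof. The delicate point throughout is the inequality $\mathrm{mult}_y C \le \sum_j p_j + r$; everything else is a direct consequence of the intersection numbers already tabulated in Proposition \ref{curves-cone} and the smoothness assertion of Lemma \ref{curves}. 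If establishing the multiplicity bound for an arbitrary irreducible curve proves too rigid, I would instead reduce to the case where $C$ lies in a single fiber (so that $r = 0$ and $\mathrm{mult}_y C$ is controlled by classical Grassmannian/flag-variety line geometry) together with the horizontal case $C = s(X)$, and interpolate.
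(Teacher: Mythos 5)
Your upper bound argument is correct and coincides with the paper's: the smooth curves $D_i \equiv L_i$ through $y$ from Lemma \ref{curves} give Seshadri ratios exactly $a_i$, so $\varepsilon(L,y) \le \min_i a_i$. The gap is in the lower bound. You correctly reduce everything to the inequality $\mathrm{mult}_y C \le \sum_j p_j + r$, but you do not prove it, and neither of your two suggestions for proving it works as stated. Your primary suggestion is to intersect $C$ with a single effective divisor that ``separates the fiber directions from the horizontal direction.'' For B\'ezout to give the bound you would need an effective divisor $Y$ through $y$, not containing $C$, lying in the class $\tilde{\omega}_1+\cdots+\tilde{\omega}_{\gamma}+\mathcal{L}$ (the unique nef class pairing to $1$ with every $L_j$ and with $s(X)$). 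That class is ample, being interior to the nef cone, but ampleness alone does not produce a member of that \emph{exact} linear system passing through an arbitrary point $y$ and missing an arbitrary curve $C$; very ampleness of this particular class is not established anywhere, and replacing it by a multiple $m\bigl(\sum_i \tilde{\omega}_i + \mathcal{L}\bigr)$ inflates the B\'ezout bound to $m\bigl(\sum_j p_j + r\bigr)$, which is useless. Your fallback (``reduce to the fiber case together with the horizontal case $C=s(X)$, and interpolate'') is not an argument either: an irreducible curve through $y$ need be neither contained in a fiber nor numerically a multiple of $s(X)$, and these intermediate curves are precisely the hard case that ``interpolate'' sweeps under the rug.

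The paper closes exactly this gap with a dichotomy, and notably it never proves your unified inequality in one stroke; it proves a different, sufficient bound in each case, using a \emph{different divisor} each time. Case 1: if $C$ is not contained in the fiber $\mathrm{Fl}(E_x)$ over $x=\pi(y)$, then that fiber is itself an effective divisor of class $\mathcal{L}$ passing through $y$ and not containing $C$, so B\'ezout gives $\mathrm{mult}_y C \le C\cdot \mathcal{L} = r$, whence $\frac{C\cdot L}{\mathrm{mult}_y C} \ge \frac{\sum_j a_jp_j + br}{r} \ge b$. Case 2: if $C$ lies in the fiber, then $r=0$, and the fiber carries the very ample line bundle $\mathcal{O}_{\mathrm{Fl}(E_x)}(1)$ (pulled back from the product of Pl\"ucker polarizations), which meets each $L_j$ in degree $1$; very ampleness lets one choose $Y \in |\mathcal{O}_{\mathrm{Fl}(E_x)}(1)|$ with $y \in Y$ and $C \not\subseteq Y$, and B\'ezout gives $\mathrm{mult}_y C \le C\cdot Y = \sum_j p_j$, whence the ratio is at least $\min_i a_i$. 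The two cases yield the bounds $b$ and $\min_i a_i$ respectively, whose minimum is the claimed lower bound. The essential point you are missing is that the divisor controlling $\mathrm{mult}_y C$ must be chosen according to the position of $C$ relative to the fiber: globally one has only the fiber class $\mathcal{L}$ at one's disposal, while inside a fiber one has the very ample Pl\"ucker class, and no single global divisor class is known to do both jobs at once.
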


\begin{proof} Let $y \in \text{Fl}(E)$.
	By Lemma \ref{curves}, for every $i$ there exist smooth curves $D_i$ passing through $y$ which are numerically equivalent to the curves $L_i$. So for every $1 \le i \le \gamma$, 
	\begin{equation}
		\frac{D_i \cdot L}{\text{mult}_x {D_i}}=a_i.
	\end{equation}
	This implies that $\varepsilon(L, y) \le \min(a_1, a_2, \cdots, a_{\gamma})$, giving one of the required inequalities. 

	Let $C \subset \text{Fl}(E)$ be an irreducible and reduced curve passing through $y$. For the proof of the other inequality, we consider two cases.
	
	\textbf{\underline{Case 1}}: Suppose that $C$ is not contained inside the fiber $\text{Fl}(E_x)$ over the point $x: = \pi(y) \in X$. Then by B\'ezout's theorem, 
	$$C \cdot \text{Fl}(E_x) \ge \text{mult}_y C \cdot \text{mult}_y \text{Fl}(E_x).$$ Since $ \text{Fl}(E_x)$ is a smooth variety, $\text{mult}_y \text{Fl}(E_x) = 1$. The associated line bundle to the divisor 
	$\text{Fl}(E_x) \subset \text{Fl}(E)$ is $\mathcal{L}$. So 
	\begin{eqnarray}\label{mult}
	C \cdot \mathcal{L} \ge \text{mult}_y C.
	\end{eqnarray}
	
	By Proposition  \ref{curves-cone}, $C$ is numerically equivalent to $p_1 L_1 + p_2 L_2 + \cdots + p_{\gamma} L_{\gamma} + r s(X)$ for some non-negative integers $p_1,\ldots,p_{\gamma},r$. Then, by \eqref{mult}, 
	$$r \ge \text{mult}_y C.$$
	
Then
	\begin{eqnarray*}
		\frac{C \cdot L}{\text{mult}_y C}&=&\frac{(p_1 L_1 + p_2 L_2 + \cdots + p_{\gamma} L_{\gamma} + r s(X)) \cdot (a_1 \tilde{\omega_1} + a_2 \tilde{\omega_2} + \cdots + a_{\gamma} \tilde{\omega_{\gamma}} +  b \mathcal{L})}{\text{mult}_y C}\\
		 &\ge& \frac{p_1  a_1 + \ldots+ p_{\gamma}a_{\gamma}+r b}{r} \ge b.
	\end{eqnarray*}
\textbf{\underline{Case 2}}:  Suppose now that the curve $C$ is contained inside the fiber $\text{Fl}(E_x)$ over $x$. Then 
$C$ is numerically equivalent to $p_1 L_1 + p_2 L_2 + \cdots + p_{\gamma} L_{\gamma}$ for 
some non-negative integers $p_1,\ldots,p_{\gamma}$. 

We have the following natural embedding of $\text{Fl}(E_x)$. See \eqref{phii}. 

\begin{center}
	$\Phi=(\Phi_1, \Phi_2, \cdots, \Phi_{\gamma}) : \text{Fl}(E_x) \hookrightarrow \prod_{i=1}^{\gamma} \text{Gr}_{r_{k_i}}(E_x)$
\end{center}

For $1 \le i \le \gamma$, the image $\Phi_i(C)$ of the curve $C$ is contained in $ \text{Gr}_{r_{k_i}}(E_x)$. We will denote the image by $C_i$. It  is numerically equivalent to $p_i \Phi_i(L_i)$, where $\Phi_i(L_i)$ is a line in $\text{Gr}_{r_{k_i}}(E_x)$.

Let $\mathcal{O}_{\text{Gr}_{r_{k_i}}(E_x)}(1)$ be the tautological ample line bundle on the Grassmannian $\text{Gr}_{r_{k_i}}(E_x)$. We will denote it by $\mathcal{O}_i(1)$, for simplicity. Then $\mathcal{O}_1(1) \boxtimes \mathcal{O}_2(1) \boxtimes \cdots \boxtimes \mathcal{O}_{\gamma}(1)$ is an ample line bundle on $\prod_{i=1}^{\gamma} \text{Gr}_{r_{k_i}}(E_x)$. 


Let $\mathcal{O}_{\text{Fl}(E_x)}(1): = \Phi^*(\mathcal{O}_1(1) \boxtimes \mathcal{O}_2(1) \boxtimes \cdots \boxtimes \mathcal{O}_{\gamma}(1))$, which is a very ample line bundle on $\text{Fl}(E_x)$. Choose an effective Cartier divisor $Y$ in the linear system $|\mathcal{O}_{\text{Fl}(E_x)}(1)|$ such that $y \in Y \cap C$ and $C \nsubseteq Y$. By B\'ezout's theorem, $$C \cdot Y \ge (\text{mult}_y C) (\text{mult}_y Y).$$ Thus 
\begin{eqnarray}\label{mult2}
\text{mult}_y C \le  C \cdot Y.
\end{eqnarray}

For $1 \le i \le \gamma$, 
let $H_i$ be a general hyperplane in $\text{Gr}_{r_{k_i}}(E_x)$. Then we may write 
$$Y = Y_1+\ldots+Y_{\gamma},$$ where 
$$Y_1 \in |H_1 \times \text{Gr}_{r_{k_2}}(E_x) \times \cdots \times \text{Gr}_{r_{k_{\gamma}}}(E_x) |, \ldots,$$
$$Y_{\gamma} \in |\text{Gr}_{r_{k_1}}(E_x) \times \text{Gr}_{r_{k_2}}(E_x)  \times \cdots \times \text{Gr}_{r_{k_{\gamma-1}}}(E_x) \times H_{\gamma} |.$$

For $j \ne i \in \{1,\ldots,\gamma\}$, we have $$L_j \cdot \big(\text{Gr}_{r_{k_1}}(E_x) \times \text{Gr}_{r_{k_2}}(E_x) \times \cdots \times H_i \times \cdots \times \text{Gr}_{r_{k_{\gamma}}}(E_x) \big) = 0,$$ because the 
line $L_j$ maps to a constant on $\text{Gr}_{r_{k_i}}(E_x)$. Thus a general hyperplane $H_i$ will not pass through this point. 

On the other hand, for any $i \in \{1,\ldots,\gamma\}$,
$$L_i \cdot \big(\text{Gr}_{r_{k_1}}(E_x) \times \text{Gr}_{r_{k_2}}(E_x) \times \cdots \times H_i \times \cdots \times \text{Gr}_{r_{k_{\gamma}}}(E_x) \big) = 1.$$

Hence 
$$C \cdot Y = (p_1 L_1 + p_2 L_2 + \cdots + p_{\gamma} L_{\gamma}) \cdot Y = 
p_1 + p_2 + \cdots + p_{\gamma}.$$

Using \eqref{mult2}, we obtain 

\begin{eqnarray*}
	\frac{C \cdot L}{\text{mult}_y C} &\ge& \frac{C \cdot L}{C\cdot Y} \\
	&=&\frac{a_1p_1 + a_2p_2 + \cdots + a_{\gamma}p_{\gamma}}{p_1 + p_2 + \cdots + p_{\gamma}} \\ &\ge& \min(a_1, a_2, \cdots, a_{\gamma}).
\end{eqnarray*}

Combining the Cases 1 and 2, we conclude that every Seshadri ratio of $L$ at $y$ is 
at least $b$ (happens in Case 1) or 
at least $\min(a_1, a_2, \cdots, a_{\gamma})$ (happens in Case 2).

Thus  $\varepsilon(L, x) \ge \min(a_1, a_2, \cdots, a_{\gamma}, b)$.
This completes the proof of Theorem.
\end{proof}

Theorem \ref{main-thm} immediately gives the following. 

\begin{corollary}
Let the notation be as in Theorem \ref{main-thm}. Let $L = (a_1, a_2, \cdots, a_{\gamma}, b)$ be a nef line bundle on $\text{Fl}(E)$ and suppose that $b \ge \min(a_1, a_2, \cdots, a_{\gamma})$. 
Then $\varepsilon(L, y) = \min(a_1, a_2, \cdots, a_{\gamma})$ for all $y \in \text{Fl}(E)$. 
\end{corollary}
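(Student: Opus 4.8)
The plan is to derive the corollary directly from the two-sided bound established in Theorem \ref{main-thm}, which for any point $y \in \text{Fl}(E)$ asserts
\[
\min(a_1, a_2, \cdots, a_{\gamma}, b) \le \varepsilon(L, y) \le \min(a_1, a_2, \cdots, a_{\gamma}).
\]
The idea is that the hypothesis $b \ge \min(a_1, a_2, \cdots, a_{\gamma})$ forces the lower bound to coincide with the upper bound, so the squeeze collapses to an equality.

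\textbf{Step 1: Simplify the lower bound using the hypothesis.} First I would observe that the lower bound in Theorem \ref{main-thm} is $\min(a_1, a_2, \cdots, a_{\gamma}, b)$, a minimum taken over the full tuple of coefficients. Since we are given $b \ge \min(a_1, a_2, \cdots, a_{\gamma})$, the entry $b$ is not strictly smaller than the minimum of the remaining entries, and hence including $b$ in the minimum does not lower it. Therefore
\[
\min(a_1, a_2, \cdots, a_{\gamma}, b) = \min(a_1, a_2, \cdots, a_{\gamma}).
\]

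\textbf{Step 2: Collapse the two-sided bound.} Substituting this identity into the left-hand side of the inequality from Theorem \ref{main-thm}, both the lower and upper bounds become $\min(a_1, a_2, \cdots, a_{\gamma})$. This gives
\[
\min(a_1, a_2, \cdots, a_{\gamma}) \le \varepsilon(L, y) \le \min(a_1, a_2, \cdots, a_{\gamma}),
\]
and hence $\varepsilon(L, y) = \min(a_1, a_2, \cdots, a_{\gamma})$. Since $y \in \text{Fl}(E)$ was arbitrary, the conclusion holds at all points.

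There is essentially no obstacle here: the corollary is a purely formal consequence of the theorem together with the elementary fact about minima, and requires no new geometric input. The only point worth stating carefully is the first step, namely that adjoining $b$ to the minimum is harmless precisely under the stated hypothesis; all the substantive work has already been carried out in the proof of Theorem \ref{main-thm}.
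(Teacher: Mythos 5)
Your proof is correct and matches the paper's approach exactly: the paper states this corollary as an immediate consequence of Theorem \ref{main-thm}, and your squeeze argument (noting that the hypothesis $b \ge \min(a_1, \ldots, a_{\gamma})$ makes the lower and upper bounds coincide) is precisely the intended justification.
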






We now show that the lower bound in Theorem \ref{main-thm} is achieved for some points in $\text{Fl}(E)$.

\begin{proposition}\label{min}
	At a point $y$ in the section $s(X)$, $\varepsilon(L, y)$ will achieve the minimum value i.e.,  $\varepsilon(L, y) = \min(a_1, a_2, \cdots, a_{\gamma}, b)$. 
\end{proposition}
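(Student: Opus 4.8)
The plan is to combine the lower bound already established in Theorem \ref{main-thm} with a matching upper bound obtained by using the section curve $s(X)$ itself as a test curve. Since Theorem \ref{main-thm} gives $\varepsilon(L,y) \ge \min(a_1,\ldots,a_{\gamma},b)$ for every point of $\text{Fl}(E)$, it suffices to produce, for a point $y \in s(X)$, an irreducible reduced curve through $y$ whose Seshadri ratio equals $b$, together with the curves realizing $a_1,\ldots,a_{\gamma}$. The latter are already at hand: by Lemma \ref{curves} there exist smooth curves $D_i$ through $y$ numerically equivalent to $L_i$, and since $\text{mult}_y D_i = 1$ these give Seshadri ratios $D_i \cdot L = a_i$, exactly as in the proof of Theorem \ref{main-thm}.

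For the ratio $b$, I would use the curve $s(X)$ itself. The hypothesis $y \in s(X)$ guarantees that $s(X)$ is an irreducible reduced curve passing through $y$, so it is a legitimate test curve in the definition of $\varepsilon(L,y)$. Because $s:X \to \text{Fl}(E)$ is a section of $\pi$, the composition $\pi_{res}: s(X) \to X$ is an isomorphism, so $s(X)$ is smooth and hence $\text{mult}_y s(X) = 1$. The relevant intersection numbers were computed in Proposition \ref{curves-cone}: $s(X)\cdot \tilde{\omega_i} = 0$ for every $i$ and $s(X) \cdot \mathcal{L} = 1$. Therefore
\[
s(X)\cdot L = \sum_{i=1}^{\gamma} a_i\,(s(X)\cdot \tilde{\omega_i}) + b\,(s(X)\cdot \mathcal{L}) = b,
\]
so the Seshadri ratio of $L$ along $s(X)$ at $y$ equals $b$.

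Taking the infimum over the curves $D_1,\ldots,D_{\gamma}$ and $s(X)$ gives $\varepsilon(L,y) \le \min(a_1,\ldots,a_{\gamma},b)$, and combining this with the lower bound of Theorem \ref{main-thm} yields the desired equality. There is essentially no obstacle here: the whole argument rests on the fact that $s(X)$ passes through $y$ and that its pairing with the nef generators is already known, so the only point needing (minor) care is confirming that $s(X)$ is smooth at $y$ with multiplicity one, which is immediate from its being the isomorphic image of the smooth curve $X$ under the section $s$.
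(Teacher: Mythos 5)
Your proposal is correct and follows essentially the same argument as the paper: both use the section $s(X)$ itself as the test curve through $y$, noting it is smooth (hence $\mathrm{mult}_y\,s(X)=1$) with $s(X)\cdot L = b$ by the intersection numbers from Proposition \ref{curves-cone}, and combine this with the bounds of Theorem \ref{main-thm}. The only cosmetic difference is that you re-derive the upper bound $\min(a_1,\ldots,a_{\gamma})$ via Lemma \ref{curves}, whereas the paper simply cites it from Theorem \ref{main-thm}.
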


\begin{proof}
	Since we have $\varepsilon( L, y) \le \min(a_1, a_2, \cdots, a_{\gamma})$, it is enough to show that $\varepsilon(L , y) \le b$ for any $y \in \text{Fl}(E)$.
	
	We see easily that the Seshadri ratio corresponding to the curve $s(X)$ is precisely $b$. Indeed, 
	note that $s(X)$ is smooth since it is isomorphic to $X$. So 
$$\frac{s(X) \cdot L}{ \text{mult}_y s(X)} = \frac{s(X) \cdot L}{ 1 } = s(X) \cdot (a_1 \tilde{\omega_1} + a_2 \tilde{\omega_2} + \cdots + a_{\gamma} \tilde{\omega_{\gamma}} +  b \mathcal{L}) = b.$$

This show that $\varepsilon(L, y) \le b$ and proves the corollary. 
\end{proof}

\subsection{Seshadri constants at general points of $\text{Fl}(E)$} In this section, we show that the upper bound 
in Theorem \ref{main-thm} is achieved for a general point of $\text{Fl}(E)$ under an additional assumption. 

We quickly recall notation from Section \ref{intro}. See \eqref{flag-define}. 

Let $$0=E_0 \subset E_1 \subset E_2 \subset \cdots \subset E_{d-1} \subset E_d=E$$ be the Harder-Narasimhan filtration of $E$. Fix $1 \le k_1 < k_2 < \cdots <k_{\gamma} \le d-1$.

Then the flag bundle $\text{Fl}(E) = \text{Flag}(r_{k_1}, r_{k_2}, \cdots , r_{k_{\gamma}}, E) $ parametrizes flags of fibers 
of the form 

$$E_x \rightarrow V_{k_1} \rightarrow  V_{k_2} \rightarrow \cdots \rightarrow V_{k_{\gamma}},$$

where $\text{dim}(V_{k_i})=r_{k_i}: = \text{rank } E/E_{k_i}$,  for $1 \le i \le \gamma$.


We now make the following additional assumption. 

\begin{assumption}\label{assume}~
\begin{enumerate}
\item For each $1\le i \le \gamma$, there exists $c_i \in \{1,\ldots, d\}$ such that  $\rank(E_{c_i}) = r_{k_i}$.
\item For  $1\le i \le \gamma$, let  $\zeta_i:  = \text{deg} (E_{c_i})$.  Then  
$\zeta_i$ is an integer multiple of $r_{k_i}$.
\end{enumerate}
\end{assumption}

For the reminder of this section, we assume that Assumption \ref{assume} holds. 


Next two results will be used for computing Seshadri constants at general points of $\text{Fl}(E)$. 

\begin{proposition}\cite[Theorem 4.1]{BHP}\label{bhp}
	The pseudo-effective cone of $\text{Gr}_{r_{k_i}}(E)$ is generated by $\mathcal{O}_{\text{Gr}_{r_{k_i}}(E)}(1) - \zeta_i \mathcal{L}_i$ and $\mathcal{L}_i$.
\end{proposition}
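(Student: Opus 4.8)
The variety $G:=\mathrm{Gr}_{r_{k_i}}(E)$ is a Grassmann bundle over the curve $X$, so its relative Picard number is one and $\mathrm{NS}(G)_{\mathbb{R}}$ is two dimensional, with basis $\xi:=\mathcal{O}_{\mathrm{Gr}_{r_{k_i}}(E)}(1)$ and $\phi:=\mathcal{L}_i$. The pseudo-effective cone is then a two dimensional cone, determined by its two extremal rays; writing a class as $\alpha\xi+\beta\phi$, the assertion is precisely that
\[
\overline{\mathrm{Eff}}(G)=\{\alpha\xi+\beta\phi : \alpha\ge 0,\ \zeta_i\alpha+\beta\ge 0\},
\]
whose extremal rays are $\phi$ and $\xi-\zeta_i\phi$. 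The plan is to prove the two inclusions separately: first that both proposed generators are effective, and then that no effective class lies outside the stated cone.

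For effectivity, the class $\phi=\mathcal{L}_i$ is the pullback of a point of $X$, hence effective. To realize $\xi-\zeta_i\phi$ I would use a determinantal construction. Assumption \ref{assume}(1) provides the Harder--Narasimhan subbundle $E_{c_i}\subset E$ of rank $r_{k_i}$ and degree $\zeta_i$. Pulling back along $f_i$ and composing with the universal quotient yields a morphism $f_i^{*}E_{c_i}\to \mathcal{Q}$ of vector bundles of the same rank $r_{k_i}$ on $G$, and its determinant is a global section of $\det\mathcal{Q}\otimes f_i^{*}(\det E_{c_i})^{\vee}$, a line bundle whose numerical class is $\xi-\zeta_i\phi$. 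Since a general rank-$r_{k_i}$ quotient of a fibre $E_x$ restricts isomorphically to the subspace $(E_{c_i})_x$, this determinant does not vanish identically, so its zero scheme is an effective divisor in the class $\xi-\zeta_i\phi$. This gives $\overline{\mathrm{Eff}}(G)\supseteq\langle\phi,\ \xi-\zeta_i\phi\rangle$.

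For the reverse inclusion I must establish two inequalities. The inequality $\alpha\ge 0$ is immediate: the images $\Phi_i(L_i)$ of the lines constructed in Section \ref{cone-curves} are lines in the fibres $\mathrm{Gr}_{r_{k_i}}(E_x)$, they cover $G$, and they satisfy $\Phi_i(L_i)\cdot\xi=1$ and $\Phi_i(L_i)\cdot\phi=0$; hence any pseudo-effective $D=\alpha\xi+\beta\phi$ has $\alpha=D\cdot\Phi_i(L_i)\ge 0$. For the inequality $\zeta_i\alpha+\beta\ge 0$ it suffices, after dividing by $\alpha$ and passing to closures, to show that an integral class $\xi-c\phi$ is effective only when $c\le\zeta_i$. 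I would detect effectivity by pushing forward to $X$: by the relative Borel--Weil theorem $f_{i*}\mathcal{O}_G(m)=\mathbb{S}_{(m^{r_{k_i}})}(E)$, the Schur functor attached to the rectangular partition with $r_{k_i}$ rows of width $m$, so the projection formula gives
\[
H^{0}\big(G,\,m\xi-mc\,\phi\big)=H^{0}\big(X,\ \mathbb{S}_{(m^{r_{k_i}})}(E)\otimes(\mathcal{L}')^{\otimes(-mc)}\big).
\]
A vector bundle on a curve admitting a nonzero section has $\mu_{\max}\ge 0$, so nonvanishing for some $m>0$ forces $\mu_{\max}\big(\mathbb{S}_{(m^{r_{k_i}})}(E)\big)\ge mc$.

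The whole argument therefore rests on the slope identity
\[
\mu_{\max}\big(\mathbb{S}_{(m^{r_{k_i}})}(E)\big)=m\,\zeta_i,
\]
which I expect to be the main obstacle. The inequality $\ge$ is the easy half, witnessed by the sub-line-bundle $(\det E_{c_i})^{\otimes m}=\mathbb{S}_{(m^{r_{k_i}})}(E_{c_i})\hookrightarrow\mathbb{S}_{(m^{r_{k_i}})}(E)$ of degree $m\zeta_i$; the reverse inequality is the substantial point and amounts to controlling the Harder--Narasimhan filtration of a Schur power of $E$. Concretely it reduces to $\mu_{\max}(\wedge^{r_{k_i}}E)=\zeta_i$, namely that the maximal destabilizing line subbundle of $\wedge^{r_{k_i}}E$ is the determinant of the maximal-degree rank-$r_{k_i}$ subbundle of $E$. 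Here Assumption \ref{assume}(1) is exactly what guarantees that this maximal subbundle is the Harder--Narasimhan term $E_{c_i}$, so that $\zeta_i$ equals the sum of the top $r_{k_i}$ Harder--Narasimhan slopes of $E$. One can obtain the required bound from the semistability of Schur functors of semistable bundles (Ramanan--Ramanathan), applied to the graded pieces of the Harder--Narasimhan filtration of $E$; alternatively, one may pass through the Plücker embedding $G\hookrightarrow\mathbb{P}((\wedge^{r_{k_i}}E)^{*})$ and invoke the known description of the pseudo-effective cone of a projective bundle over a curve in terms of $\mu_{\max}(\wedge^{r_{k_i}}E)$. Once this identity is in hand, the two inequalities together with the effectivity of the generators, after taking closures, give the claimed cone.
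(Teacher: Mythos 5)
The paper does not actually prove this proposition: it is imported verbatim from \cite[Theorem 4.1]{BHP}, so there is no internal proof to compare your argument against. Judged on its own merits, your reconstruction is sound and is essentially the argument behind the cited theorem. All three pillars are correct: (i) $\mathrm{NS}(G)_{\mathbb{R}}$ is two-dimensional with basis $\xi=\mathcal{O}_{\mathrm{Gr}_{r_{k_i}}(E)}(1)$, $\phi=\mathcal{L}_i$, so one only has to pin down two extremal rays; (ii) $\xi-\zeta_i\phi$ is effective, realized by the determinant of $f_i^*E_{c_i}\to\mathcal{Q}$, which is a nonzero section because a generic rank-$r_{k_i}$ quotient of $E_x$ restricts to an isomorphism on $(E_{c_i})_x$; (iii) extremality via the relative Borel--Weil pushforward $f_{i*}\mathcal{O}(m)=\mathbb{S}_{(m^{r_{k_i}})}(E)$ together with the slope identity $\mu_{\max}\big(\mathbb{S}_{(m^{r_{k_i}})}(E)\big)=m\zeta_i$. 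You are also right that only part (1) of Assumption \ref{assume} is needed here --- it is exactly what makes $\zeta_i$ equal to the greedy sum of the top Harder--Narasimhan slopes, i.e.\ to $\mu_{\max}(\wedge^{r_{k_i}}E)$ --- while part (2) (divisibility) enters only in the next quoted result (\cite[Proposition 2.3]{BHNN}, the uniqueness of the effective divisor), not in this one.

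Two small repairs are needed to make the sketch airtight. First, your reduction ``it suffices to show an integral class $\xi-c\phi$ is effective only when $c\le\zeta_i$'' is slightly misstated: a prime divisor can have class $a\xi+b\phi$ with $a>1$, and its ray need not contain an integral point of the form $\xi-c\phi$. The fix is to run your pushforward computation on arbitrary prime divisors: any prime divisor $Z$ satisfies $a=Z\cdot(\text{line in a fibre})\ge 0$; if $a=0$ then $Z$ is a fibre, of class $\phi$; if $a\ge 1$ then, since $\mathrm{Pic}(G)=\mathbb{Z}\,\mathcal{O}(1)\oplus f_i^*\mathrm{Pic}(X)$, $Z$ is the zero divisor of a section of $\mathcal{O}(a)\otimes f_i^*N$ with $\deg N=b$, and $f_{i*}$ gives a nonzero section of $\mathbb{S}_{(a^{r_{k_i}})}(E)\otimes N$, forcing $a\zeta_i+b\ge 0$; then every effective class, and by closure every pseudo-effective class, lies in the stated cone. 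Second, the crux --- the slope identity --- is announced as a plan rather than proved, but the tools you name do close it in characteristic zero: $\mathbb{S}_{(m^{r_{k_i}})}(E)$ is a direct summand of $(\wedge^{r_{k_i}}E)^{\otimes m}$, Schur/tensor powers of semistable bundles are semistable (Ramanan--Ramanathan), and the filtration of $\wedge^{r_{k_i}}E$ induced by the Harder--Narasimhan filtration of $E$ has semistable graded pieces $\bigotimes_j\wedge^{a_j}(E_j/E_{j-1})$, whose maximal slope is attained by taking the top $r_{k_i}$ slopes, i.e.\ equals $\deg E_{c_i}=\zeta_i$ thanks to Assumption \ref{assume}(1). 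So this is an incompleteness of exposition rather than a gap in the method.
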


\begin{proposition}\cite[Proposition 2.3]{BHNN}
	The divisor $\mathcal{O}_{\text{Gr}_{r_{k_i}}(E)}(1) - \zeta_i \mathcal{L}_i$ is effective and there exists a unique effective divisor in the linear system $|\mathcal{O}_{\text{Gr}_{r_{k_i}}(E)}(1) - \zeta_i \mathcal{L}_i|$, i.e., 
	$$\dim \text{H}^0(\mathcal{O}_{\text{Gr}_{r_{k_i}}(E)}(1) - \zeta_i \mathcal{L}_i) =1 .$$
\end{proposition}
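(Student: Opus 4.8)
The plan is to push the line bundle down to the base curve $X$ and then read off its global sections from the Harder--Narasimhan filtration of an exterior power of $E$. Write $r := r_{k_i}$ and $f := f_i : \text{Gr}_r(E) \to X$.

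First I would compute the direct image of the tautological bundle. The restriction of $\mathcal{O}_{\text{Gr}_r(E)}(1)$ to a fiber $\text{Gr}_r(E_x)$ is the Pl\"ucker polarization, whose space of global sections is $\wedge^r E_x$; since this has constant dimension, cohomology and base change give $f_*\mathcal{O}_{\text{Gr}_r(E)}(1) = \wedge^r E$. Taking global sections through the Leray spectral sequence and using the projection formula (recall $\mathcal{L}_i = f^*\mathcal{L}'$) then yields
\[
H^0\big(\text{Gr}_r(E),\, \mathcal{O}_{\text{Gr}_r(E)}(1) - \zeta_i \mathcal{L}_i\big) \;\cong\; H^0\big(X,\, \wedge^r E \otimes {\mathcal{L}'}^{-\zeta_i}\big) \;=\; \text{Hom}\big({\mathcal{L}'}^{\zeta_i},\, \wedge^r E\big).
\]
Because Seshadri constants depend only on numerical classes, I may replace the degree-$\zeta_i$ bundle ${\mathcal{L}'}^{\zeta_i}$ by the numerically equivalent line bundle $M := \det E_{c_i} = \wedge^r E_{c_i}$, where $E_{c_i}$ is the rank-$r$ step of the filtration provided by Assumption \ref{assume}(1). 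It then suffices to show $\dim \text{Hom}(M, \wedge^r E) = 1$.

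The central step is the slope analysis of $\wedge^r E$. Let $\mu_1 > \mu_2 > \cdots > \mu_d$ be the strictly decreasing slopes of the Harder--Narasimhan quotients of $E$. The filtration of $E$ induces a filtration of $\wedge^r E$ whose graded pieces are the bundles $\bigotimes_j \wedge^{a_j}(\text{gr}_j E)$ over all tuples with $0 \le a_j \le \rank(\text{gr}_j E)$ and $\sum_j a_j = r$. In characteristic zero, exterior and tensor powers of semistable bundles are semistable with additive slopes, so each such piece is semistable of slope $\sum_j a_j \mu_j$. Since the $\mu_j$ strictly decrease and $\rank E_{c_i} = r$, the functional $\sum_j a_j \mu_j$ attains its maximum \emph{uniquely} at the greedy tuple filling the top graded pieces, i.e. $a_j = \rank(\text{gr}_j E)$ for $j \le c_i$ and $a_j = 0$ otherwise; the associated piece is the line bundle $\det E_{c_i}$ of slope $\zeta_i$. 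Hence $\mu_{\max}(\wedge^r E) = \zeta_i$, the maximal destabilizing subsheaf of $\wedge^r E$ is exactly the line bundle $\det E_{c_i}$, and the quotient $\wedge^r E / \det E_{c_i}$ has $\mu_{\max} < \zeta_i$. Granting this, the dimension count is immediate: any nonzero $\varphi : M \to \wedge^r E$ is injective since $M$ is a line bundle on a curve, its image is isomorphic to $M$ of degree $\zeta_i = \mu_{\max}(\wedge^r E)$, so its saturation is a sub-line-bundle of degree exactly $\zeta_i$ and hence equals the unique such sub-line-bundle $\det E_{c_i}$. Thus $\varphi$ factors as an isomorphism $M \xrightarrow{\sim} \det E_{c_i}$ followed by the inclusion and is determined up to a scalar, giving $\dim \text{Hom}(M,\wedge^r E)\le 1$; the canonical inclusion $\wedge^r E_{c_i} \hookrightarrow \wedge^r E$ supplies a nonzero element, so the dimension equals $1$ and the effective member is unique.

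I expect the main obstacle to be this slope analysis: pinning down the maximal destabilizing subsheaf of $\wedge^r E$ rests on the semistability of exterior and tensor powers in characteristic zero together with the strict decrease of the Harder--Narasimhan slopes, and it is precisely the rank hypothesis $\rank E_{c_i} = r$ of Assumption \ref{assume}(1) that forces the optimal multidegree to be unique and its graded piece to be a line bundle. A secondary, bookkeeping point is that the honest effective divisor lives in the linear system of $\mathcal{O}_{\text{Gr}_r(E)}(1) - f^*\det E_{c_i}$, whose numerical class is $\mathcal{O}_{\text{Gr}_r(E)}(1) - \zeta_i\mathcal{L}_i$; matching ${\mathcal{L}'}^{\zeta_i}$ with $\det E_{c_i}$ at the level of honest line bundles is where the integrality in Assumption \ref{assume}(2) and the freedom in the choice of the degree-one bundle $\mathcal{L}'$ are used.
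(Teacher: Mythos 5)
The paper gives no internal proof of this statement --- it is quoted from \cite{BHNN} (Proposition 2.3 there) --- so the comparison can only be with the cited source, whose argument proceeds in essentially the same way as yours: pushforward to the curve plus a Harder--Narasimhan slope analysis. Your proof is correct. The identification $f_i{}_*\mathcal{O}_{\text{Gr}_{r_{k_i}}(E)}(1)\cong \wedge^{r_{k_i}}E$ turns sections into homomorphisms from a degree-$\zeta_i$ line bundle into $\wedge^{r_{k_i}}E$; in characteristic zero the graded pieces of the induced filtration on $\wedge^{r_{k_i}}E$ are semistable with slopes $\sum_j a_j\mu_j$ (the $\mu_j$ being the Harder--Narasimhan slopes of $E$); and the hypothesis $\rank E_{c_i}=r_{k_i}$ of Assumption \ref{assume}(1) makes the greedy tuple the unique maximizer, so the maximal destabilizing subsheaf of $\wedge^{r_{k_i}}E$ is the line subbundle $\det E_{c_i}$ of degree $\zeta_i$, through which every nonzero homomorphism must factor; uniqueness up to scalar follows. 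Your ``bookkeeping point'' is in fact a genuine subtlety which you handle in the right way: for an arbitrary degree-one $\mathcal{L}'$ the literal linear system $|\mathcal{O}_{\text{Gr}_{r_{k_i}}(E)}(1)-\zeta_i\mathcal{L}_i|$ can be empty (it is nonempty precisely when $(\mathcal{L}')^{\zeta_i}\cong\det E_{c_i}$), so the proposition must be read numerically, the honest effective divisor $D_i$ lying in $|\mathcal{O}_{\text{Gr}_{r_{k_i}}(E)}(1)\otimes f_i^*(\det E_{c_i})^{-1}|$; this is exactly how the paper later uses $D_i$. One small correction: this matching does not rest on Assumption \ref{assume}(2). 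When $\zeta_i\neq 0$, divisibility of the Jacobian lets one choose $\mathcal{L}'$ with $(\mathcal{L}')^{\zeta_i}\cong\det E_{c_i}$ whether or not $r_{k_i}$ divides $\zeta_i$ (and when $\zeta_i=0$ no choice of $\mathcal{L}'$ helps unless $\det E_{c_i}$ is trivial, which again shows the numerical reading is the correct one). Assumption \ref{assume}(2) is rather inherited from the pseudo-effective cone description of Proposition \ref{bhp}; your $h^0$ computation needs only Assumption \ref{assume}(1).
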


The map $\Phi_i$ in the following commutative diagram is surjective. 
\[ \begin{tikzcd}
\text{Fl}(E) \arrow[r,hook, "\Phi"] \arrow[dr,"\Phi_i"]
&  \prod_{i=1}^{\gamma} \text{Gr}_{r_{k_i}}(E) \arrow[d, "Pr_i"]\\%
& \text{Gr}_{r_{k_i}}(E)
\end{tikzcd}
\]

Let $D_i$ be the effective Cartier divisor on $\text{Gr}_{r_{k_i}}$
corresponding to the line bundle $\mathcal{O}_{\text{Gr}_{r_{k_i}}(E)}(1) - \zeta_i \mathcal{L}_i$. Then the pullback 
$\Phi_i^{-1} D_i$ is an effective Cartier divisor in $| \Phi_i^{*} (\text{Gr}_{r_{k_i}}(E)(1) - \zeta_i \mathcal{L}_i)|$. By commutativity, we have $\Phi_i^{-1} D_i = \Phi^{-1} ({Pr_i}^{-1}(D_i))$. The pullback of the divisor under the projection $Pr_i$ is the effective divisor $$\text{Gr}_{r_{k_1}}(E) \times \text{Gr}_{r_{k_2}}(E) \times \cdots \times D_i \times \cdots \times \text{Gr}_{r_{k_{\gamma}}}(E).$$ Thus the effective divisor $\Phi_i^{-1} D_i$ in $\text{Fl}(E)$ is $$\text{Fl}(E) \cap \big(\text{Gr}_{r_{k_1}}(E) \times \text{Gr}_{r_{k_2}}(E) \times \cdots \times D_i \times \cdots \times \text{Gr}_{r_{k_{\gamma}}}(E)\big).$$ 

\begin{remark}
	Note that dim $H^0(Pr_i^{*} (\text{Gr}_{r_{k_i}}(E)(1) - \zeta_i \mathcal{L}_i)) = \text{dim } H^0(\text{Gr}_{r_{k_i}}(E)(1) - \zeta_i \mathcal{L}_i) = 1$. We consider the restriction map of sections 
	\begin{center}
	$H^0(Pr_i^{*} (\text{Gr}_{r_{k_i}}(E)(1) - \zeta_i \mathcal{L}_i)) \rightarrow H^0({Pr_i^{*} (\text{Gr}_{r_{k_i}}(E)(1) - \zeta_i \mathcal{L}_i)}|_{\text{Fl}(E)})$ 
	\end{center}
This map is injective. If the only section $s$ unique upto scaling maps to zero section, then $\text{Fl}(E)$ will be contained inside the divisor $\text{Gr}_{r_{k_1}}(E) \times \text{Gr}_{r_{k_2}}(E) \times \cdots \times D_i \times \cdots \times \text{Gr}_{r_{k_{\gamma}}}(E)$ which is not possible. The restriction of the section $s$ is same as pulling back the section via the map $\Phi_i$. So pullback of this section defines the above effective Cartier divisor in $\text{Fl}(E)$.
\end{remark}

Define a closed subvariety  $\mathcal{S} \subset \text{Fl}(E)$ as follows: 
\begin{eqnarray*}
\mathcal{S}  &=& \cap_{i=1}^{\gamma} \big(\text{Fl}(E) \cap \big(\text{Gr}_{r_{k_1}}(E) \times \text{Gr}_{r_{k_2}}(E) \times \cdots \times D_i \times \cdots \times \text{Gr}_{r_{k_{\gamma}}}(E)\big)\big) \\ 
&=& \text{Fl}(E) \cap \big( \cap_{i=1}^{\gamma} \text{Gr}_{r_{k_1}}(E) \times \text{Gr}_{r_{k_2}}(E) \times \cdots \times D_i \times \cdots \times \text{Gr}_{r_{k_{\gamma}}}(E) \big) \\
&=&\text{Fl}(E) \cap D_1 \times D_2 \times \cdots \times D_{\gamma}.
\end{eqnarray*}
Note that $\mathcal{S}$ has  codimension $\gamma$ in $\text{Fl}(E)$.

\begin{theorem}\label{general-point}
Let the notation be as in Theorem \ref{main-thm}. Assume that Assumption \ref{assume} holds. Let $L = (a_1, a_2, \cdots, a_{\gamma}, b)$ be a nef line bundle on $\text{Fl}(E)$. 
	If $y \notin \mathcal{S}$, then $\varepsilon(L, y) = \min(a_1, a_2, \cdots, a_{\gamma})$. 
\end{theorem}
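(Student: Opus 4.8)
The inequality $\varepsilon(L,y) \le \min(a_1,\ldots,a_{\gamma})$ is already contained in Theorem \ref{main-thm}, so my plan is to establish only the reverse inequality $\varepsilon(L,y) \ge \min(a_1,\ldots,a_{\gamma})$ for $y \notin \mathcal{S}$. Write $m := \min(a_1,\ldots,a_{\gamma})$ and let $C$ be an irreducible reduced curve through $y$; I must show $\frac{C\cdot L}{\text{mult}_y C} \ge m$. If $C$ is contained in the fiber $\text{Fl}(E_x)$ over $x=\pi(y)$, then Case 2 of the proof of Theorem \ref{main-thm} already gives $\frac{C\cdot L}{\text{mult}_y C}\ge m$, so I may assume $C$ is not contained in a fiber. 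Writing $C \equiv p_1 L_1 + \cdots + p_{\gamma} L_{\gamma} + r\,s(X)$ with $p_i,r\ge 0$ (Proposition \ref{curves-cone}), non-containment in a fiber forces $r = C\cdot\mathcal{L}\ge 1$, and B\'ezout against the fiber (exactly as in Case 1 of Theorem \ref{main-thm}) gives $\text{mult}_y C \le C\cdot\mathcal{L} = r$.

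The key new ingredient is the family of effective divisors $G_i := \Phi_i^{-1}(D_i)$ constructed just before the statement. Using $\mathcal{O}_{\text{Gr}_{r_{k_i}}(E)}(1) = \omega_i + \theta_i\mathcal{L}_i$ together with $\Phi_i^{*}\mathcal{L}_i = \mathcal{L}$, a direct computation gives the numerical class $G_i \equiv \tilde{\omega}_i + (\theta_i - \zeta_i)\mathcal{L}$; moreover $\omega_i$ is nef, hence pseudo-effective, so Proposition \ref{bhp} forces $\theta_i \le \zeta_i$. Feeding the intersection numbers of Proposition \ref{curves-cone} (namely $L_j\cdot\tilde{\omega}_i = \delta_{ij}$, $L_j\cdot\mathcal{L}=0$, $s(X)\cdot\tilde{\omega}_i=0$, $s(X)\cdot\mathcal{L}=1$) into this class yields $C\cdot G_i = p_i + (\theta_i-\zeta_i)r$.

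With these in hand I would argue through two complementary observations. First, if there is an index $i$ with $y\in G_i$ but $C\not\subseteq G_i$, then B\'ezout gives $\text{mult}_y C \le C\cdot G_i = p_i + (\theta_i-\zeta_i)r \le p_i$ (using $\theta_i\le\zeta_i$ and $r\ge 0$), so that $\frac{C\cdot L}{\text{mult}_y C} \ge \frac{a_i p_i}{p_i} = a_i \ge m$. Second, the hypothesis $y\notin \mathcal{S}=\bigcap_i G_i$ guarantees an index $i_0$ with $y\notin G_{i_0}$; then $C\not\subseteq G_{i_0}$, so $C\cdot G_{i_0}\ge 0$ gives $p_{i_0}\ge(\zeta_{i_0}-\theta_{i_0})r$, and when $\zeta_{i_0}>\theta_{i_0}$ (these being integers, the gap is at least $1$) this yields $p_{i_0}\ge r\ge \text{mult}_y C$, whence $\frac{C\cdot L}{\text{mult}_y C}\ge \frac{a_{i_0}p_{i_0}}{\text{mult}_y C}\ge m$.

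The main obstacle is the degenerate configuration in which these two observations fail to interlock: every index with $y\in G_i$ has $C\subseteq G_i$, while every index $i_0$ with $y\notin G_{i_0}$ satisfies $\zeta_{i_0}=\theta_{i_0}$, so that $G_{i_0}\equiv \tilde{\omega}_{i_0}$ carries no $\mathcal{L}$-twist and the bound $p_{i_0}\ge(\zeta_{i_0}-\theta_{i_0})r$ becomes vacuous. Resolving this is where Assumption \ref{assume} and the \emph{rigidity} of the divisors $D_i$ (each being the unique effective divisor in its class, by the cited result of \cite{BHNN}) must enter: I expect to show that a curve passing through a point off $\mathcal{S}$ cannot simultaneously be trapped inside all the $G_i$ that contain $y$ while avoiding the degree gap in the remaining directions, forcing $C$ either into a fiber or into $\mathcal{S}$ and so contradicting the standing hypotheses. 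Verifying that the class computation and the pseudo-effectivity input combine exactly as above, and that this degenerate configuration is genuinely excluded, is the technical heart of the argument.
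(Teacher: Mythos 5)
Your skeleton is exactly the paper's proof: reduce to curves $C$ not contained in a fiber, get $\text{mult}_y C \le r$ by B\'ezout against $\text{Fl}(E_x)$, and use that $y \notin \mathcal{S}$ gives an index $i_0$ with $y \notin G_{i_0} := \Phi_{i_0}^{-1}(D_{i_0})$, hence $C \not\subseteq G_{i_0}$ and $C \cdot G_{i_0} = p_{i_0} + (\theta_{i_0}-\zeta_{i_0})r \ge 0$. Your class computation and your ``second observation'' are precisely the paper's argument. The genuine gap is the step you explicitly leave open: ruling out the ``degenerate configuration'' $\zeta_{i_0} = \theta_{i_0}$. You guess that this requires Assumption \ref{assume} plus the rigidity (uniqueness) of the divisors $D_i$ and some containment argument forcing $C$ into a fiber or into $\mathcal{S}$; none of that is needed, because the degenerate case simply never occurs. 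If $\zeta_{i_0} = \theta_{i_0}$, then the nef generator $\omega_{i_0} = \mathcal{O}_{\text{Gr}_{r_{k_{i_0}}}(E)}(1) - \theta_{i_0}\mathcal{L}_{i_0}$ coincides with the generator $\mathcal{O}_{\text{Gr}_{r_{k_{i_0}}}(E)}(1) - \zeta_{i_0}\mathcal{L}_{i_0}$ of the pseudo-effective cone from Proposition \ref{bhp}, so the nef cone and the pseudo-effective cone of $\text{Gr}_{r_{k_{i_0}}}(E)$ are equal; by \cite[Corollary 4.3]{BHP} this forces $E$ to be semistable, contradicting the standing hypothesis that $E$ is not semistable. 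Hence $\zeta_i - \theta_i \ge 1$ for \emph{every} $i$, so $p_{i_0} \ge (\zeta_{i_0}-\theta_{i_0})r \ge r \ge \text{mult}_y C$, and then $\frac{C\cdot L}{\text{mult}_y C} \ge \frac{a_{i_0}p_{i_0} + br}{r} \ge a_{i_0} \ge \min(a_1,\ldots,a_{\gamma})$, closing the proof exactly as the paper does.

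Two smaller points. Your ``first observation'' (B\'ezout against a $G_i$ that does contain $y$) is correct but superfluous once $\zeta_i > \theta_i$ is known; the paper never uses it. And the uniqueness statement from \cite[Proposition 2.3]{BHNN} plays no role in the proof of the theorem itself---in the paper it only certifies the existence of the effective divisors $D_i$ used to define $\mathcal{S}$---so the ``rigidity'' you planned to exploit is a red herring. Assumption \ref{assume} enters only to make $\zeta_i$ and Proposition \ref{bhp} available in the stated form, not as a tool for the case analysis.
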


\begin{proof}
	Since the point $y$ does not belong to $\mathcal{S}$, let us assume without loss of generality that 
	$$y \notin D_1 \times \text{Gr}_{r_{k_2}}(E) \times \text{Gr}_{r_{k_3}}(E) \times \cdots \times \text{Gr}_{r_{k_{\gamma}}}(E).$$ 
	
	
	 Let $C$ be a curve in $\text{Fl}(E)$ passing through $y$. Suppose that
	  $C$ is numerically equivalent to $p_1 L_1 + p_2 L_2 + \cdots + p_{\gamma} L_{\gamma} + r s(X)$, for some non-negative integers $p_1,\ldots, p_{\gamma}, r$. 
	 
	 If $C$ is contained inside a fiber of $\pi: \text{Fl}(E) \rightarrow X$, then by $\textbf{Case 2}$ of Theorem \ref{main-thm}, 
	 $$\frac{C \cdot L}{\text{mult}_y C} \ge \min(a_1, a_2, \cdots, a_{\gamma}).$$
	
	 We assume now that $C$ is not contained inside a fiber. By B\'ezout's theorem we have $C \cdot \text{Fl}(E_x) \ge \text{mult}_y C$. Hence  $$C \cdot \text{Fl}(E_x) = r \ge \text{mult}_y C.$$
	 
	 Since $C$ contains $y$, $C$ is not contained in $D_1 \times \text{Gr}_{r_{k_2}}(E) \times \text{Gr}_{r_{k_3}}(E) \times \cdots \times \text{Gr}_{r_{k_{\gamma}}}(E).$
	  
	 
	 Let $C_1 \subset \text{Gr}_{r_{k_1}}(E)$ be the projection of $C$ under $\Phi_1$. Then $C_1$ is numerically equivalent to $p_1 \Phi_1(L_1) + r s(X)$, where $\Phi_1(L_1)$ is a line in $\text{Gr}_{r_{k_1}}(E)$. The image of the section $s(X)$ under the map $\Phi_1$ is the section in $\text{Gr}_{r_{1_{\gamma}}}(E)$ defined by the quotient $E \rightarrow E/E_{k_1}$ and this image is isomorphic to $s(X)$ . 
	Note that $$\theta_1: = \text{deg }   E/E_{k_1} = s(X) \cdot \mathcal{O}_{\text{Gr}_{r_{k_1}}(E)}(1).$$
	 Since $C_1$ is not contained in the effective Cartier divisor $D_1$, $C_1 \cdot D_1 \ge 0$.  So
	 
	 \begin{center}
	 	$(p_1 \Phi_1(L_1) + r s(X))\cdot (\mathcal{O}_{\text{Gr}_{r_{k_1}}(E)}(1) - \zeta_1 \mathcal{L}_1) = p_1 + r \theta_1 - r \zeta_1 = p_1 + r(\theta_1 - \zeta_1) \ge 0.$
	 \end{center}
Then $$p_1 + r(\theta_1 - \zeta_1) \ge 0 \Rightarrow p_1 \ge r(\zeta_1 - \theta_1).$$
 
 Since $\omega_1 = \mathcal{O}_{\text{Gr}_{r_{k_1}}(E)}(1) - \theta_1 \mathcal{L}_1$ is nef, $\omega_1$ is pseudo-effective. By Theorem \ref{bhp}, we can then write 
 
 \begin{center}
 	$\mathcal{O}_{\text{Gr}_{r_{k_1}}(E)}(1) - \theta_1 \mathcal{L}_1 = (\mathcal{O}_{\text{Gr}_{r_{k_1}}(E)}(1) - \zeta_1 \mathcal{L}_1) + m \mathcal{L}$, for an integer $m\ge 0$.
	 \end{center}  
Hence $m = \zeta_1 - \theta_1 \ge 0$. If $\zeta_1 - \theta_1 = 0$, then the nef cone and the pseudo-effective 
cone of $\text{Gr}_{r_{k_1}}(E)$ are the same which implies that the vector bundle $E$ is semistable, by \cite[Corollary 4.3]{BHP}. This contradicts our assumption. Thus $\zeta_1 - \theta_1 \ge 1$. Thus $p_1 \ge r(\zeta_1 - \theta_1) \ge r$. Then

\begin{eqnarray*}
	\frac{C \cdot L}{\text{mult}_y C} &=& 
	\frac{a_1p_1 + \cdots +a_{\gamma}p_{\gamma} + br}{\text{mult}_y C}\\ 
	&\ge& \frac{a_1 p_1 + \cdots +a_{\gamma}p_{\gamma} + br}{r} \\
	&\ge& \frac{a_1r + \cdots +a_{\gamma}p_{\gamma} + br}{r}\\
	&\ge& a_1 \\
	&\ge& \min(a_1, a_2, \cdots, a_{\gamma}).
\end{eqnarray*}

The proof is now complete by Theorem \ref{inequalitytheorem}. 
\end{proof}

Main results of this section are summarized in the following. 

\begin{corollary}
Let the notation be as in Theorem \ref{main-thm}. Let $L = (a_1, a_2, \cdots, a_{\gamma}, b)$ be a nef line bundle on $\text{Fl}(E)$. Then the following statements hold. 
\begin{enumerate}
\item $\min(a_1, a_2, \cdots, a_{\gamma}, b) \le \varepsilon(L, y) \le \min(a_1, a_2, \cdots, a_{\gamma})$ for all $y \in \text{Fl}(E)$. 
\item $\varepsilon(L) = \min(a_1, a_2, \cdots, a_{\gamma},b)$.
\item Assume that Assumption \ref{assume} holds. Then $\varepsilon(L,1) = \min(a_1, a_2, \cdots, a_{\gamma})$.
\end{enumerate}

\end{corollary}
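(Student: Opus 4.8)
The plan is to derive all three parts directly from results already established, treating the corollary as a bookkeeping assembly rather than a genuinely new argument. First I would observe that part (1) is literally the statement of Theorem \ref{main-thm}, so nothing beyond a citation is needed there.

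For part (2) I would combine the lower bound of part (1) with the sharpness furnished by Proposition \ref{min}. Since $\varepsilon(L) = \inf_{y} \varepsilon(L,y)$, part (1) immediately gives $\varepsilon(L) \ge \min(a_1, a_2, \cdots, a_{\gamma}, b)$. Conversely, Proposition \ref{min} exhibits points (those on the section $s(X)$) at which $\varepsilon(L,y)$ equals $\min(a_1, a_2, \cdots, a_{\gamma}, b)$, which forces $\varepsilon(L) \le \min(a_1, a_2, \cdots, a_{\gamma}, b)$. Equality follows.

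For part (3), under Assumption \ref{assume}, I would pair the upper bound of part (1) with Theorem \ref{general-point}. By definition $\varepsilon(L,1) = \sup_{y} \varepsilon(L,y)$, so part (1) yields $\varepsilon(L,1) \le \min(a_1, a_2, \cdots, a_{\gamma})$. For the reverse inequality I would use that $\mathcal{S}$ has codimension $\gamma \ge 1$ in $\text{Fl}(E)$, hence is a proper closed subvariety whose complement is nonempty and dense; picking any $y_0 \notin \mathcal{S}$, Theorem \ref{general-point} gives $\varepsilon(L, y_0) = \min(a_1, a_2, \cdots, a_{\gamma})$, so $\varepsilon(L,1) \ge \min(a_1, a_2, \cdots, a_{\gamma})$. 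Because the complement of $\mathcal{S}$ is dense, this is also the value at a very general point, consistent with the fact, recalled in the introduction, that $\varepsilon(L,1)$ is attained very generally.

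The only point requiring care — rather than a real obstacle — is confirming that the cited inputs apply exactly as stated: that Proposition \ref{min} realizes the minimum in the slot containing $b$ and not merely among the $a_i$, and that $\mathcal{S}$ is genuinely proper so that points off it exist and the supremum in part (3) is attained there. Both are guaranteed by the preceding sections (in particular by the codimension count for $\mathcal{S}$), so the corollary is a formal consequence of Theorem \ref{main-thm} and Theorem \ref{general-point} together with Proposition \ref{min}.
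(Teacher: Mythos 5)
Your proposal is correct and is exactly the paper's intended argument: the paper states this corollary without proof as a summary of the section, and the assembly you give --- part (1) restating Theorem \ref{main-thm}, part (2) combining its lower bound with Proposition \ref{min} at points of $s(X)$, and part (3) combining its upper bound with Theorem \ref{general-point} at any point off the proper closed subvariety $\mathcal{S}$ --- is precisely how the preceding results are meant to be combined. Your added care about $\mathcal{S}$ having codimension $\gamma \ge 1$ (so its complement is nonempty) is the one small point the paper leaves implicit, and you handle it correctly.
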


\begin{question}
Does Theorem \ref{general-point} hold without Assumption \ref{assume}? 
\end{question} 

\subsection{Examples}\label{examples}
In this concluding section, we give some examples where our results apply. In all the examples, $X$ denotes a 
connected smooth complex  projective curve.

\begin{example}
	Let $E = L_1 \oplus L_2 \oplus {\mathcal{O}_X}^{\oplus 3}$ be a vector bundle of rank $5$ over $X$, where $\text{deg}(L_1) = 1$, and $\text{deg}(L_2) =2$. Note that $\mu(E) = \frac{3}{5}$ and $E$ is not semistable. 
	
	The Harder-Narasimhan filtration of $E$ is the following
	\[
	0 \subseteq L_2 \subseteq L_2 \oplus L_1 \subseteq E.
	\]   
	
	We consider the flag bundle $\text{Fl}(4, 3, E)$. The Picard rank of  $\text{Fl}(4, 3, E)$ is $3$. Theorem \ref{main-thm}  gives bounds on Seshadri constants for any nef line bundle $L$ on $\text{Fl}(4, 3, E)$. Further, Proposition \ref{min} also gives the value of $\varepsilon(L)$. However Assumption \ref{assume} does not hold in this case. 
\end{example}

\begin{example}
	Let $E = L_1 \oplus L_2 \oplus {\mathcal{O}_X}^{\oplus 3}$ be a rank $5$ vector bundle on $X$, where $\text{deg}(L_1) = 1$, and $\text{deg}(L_2) = -1$. Then $\mu(E) = 0$ and $E$ is not semistable. 
	
	Then the Harder-Narasimhan filtration of $E$ is the following
	\[
	0 \subseteq L_1 \subseteq L_1 \oplus {\mathcal{O}_X}^{\oplus 3} \subseteq E. 
	\]   
	Consider the flag bundle $\text{Fl}(4, 1, E)$. The Picard rank of  $\text{Fl}(4, 1, E)$ is $3$. 
	
	As above both \ref{main-thm} and Proposition \ref{min} apply, but not Theorem \ref{general-point}.
	
\end{example}

\begin{example}
	Let $E = L_1 \oplus {\mathcal{O}_X}^{\oplus 3} \oplus L_2$ be a vector bundle of rank $5$ over $X$, where $\text{deg}(L_1) =4$, and $\text{deg}(L_2) = -1$. Here $\mu(E) = \frac{3}{5}$ and $E$ is not semistable.

	The Harder-Narasimhan filtration of $E$ is the following
	\[
	0 \subseteq L_1 \subseteq L_1 \oplus {\mathcal{O}_X}^{\oplus 3} \subseteq E. 
	\]   
	We consider the flag bundle  $\text{Fl}(4, 1, E)$. The Picard rank of  $\text{Fl}(4, 1, E)$ is $3$. In this examples, all our results apply:  \ref{main-thm},  Proposition \ref{min} and Theorem \ref{general-point}. 
	
\end{example}

\begin{example}
	Let $E = L_1 \oplus L_2 \oplus L_3 \oplus L_4 \oplus {\mathcal{O}_X}^{\oplus 3} $ be a vector bundle of rank $7$ over $X$, where $\text{deg}(L_1) =3$, $\text{deg}(L_2) =1$, $\text{deg}(L_3) =-1$, and $\text{deg}(L_4) =-2$. 
	Here $\mu(E) = \frac{1}{7}$ and $E$ is not semistable. 
	
	The Harder-Narasimhan filtration of $E$ is the following
	\[
	0 \subseteq L_1 \subseteq L_1 \oplus L_2 \subseteq L_1 \oplus L_2 \oplus {\mathcal{O}_X}^{\oplus 3} \subseteq L_1 \oplus L_2 \oplus L_3 \oplus {\mathcal{O}_X}^{\oplus 3} \subseteq E. 
	\]   
	
	In this case, we can consider several flag bundles which satisfy our set-up.

	\begin{itemize}
		\item All our results apply for the flag bundle $\text{Fl}(2, 1, E)$. 
		
		 \item Some other flag bundles we may consider are given below. Assumption \ref{assume} does not apply to any of them. 
		 
		 $\text{Fl}(5, 1, E)$, $\text{Fl}(5, 2, E)$, $\text{Fl}(6, 5, E)$, $\text{Fl}(6, 2, E)$, $\text{Fl}(6, 1, E)$, $\text{Fl}(5, 2, 1, E)$, $\text{Fl}(6, 2, 1, E)$, or $\text{Fl}(6, 5, 2, 1, E)$.	\end{itemize}

\end{example}

\begin{example}
Let $E = L_1 \oplus L_2 \oplus L_3 \oplus L_4 \oplus {\mathcal{O}_X}^{\oplus 3} $ be a vector bundle of rank $7$ over $X$, where $\text{deg}(L_1) =8$, $\text{deg}(L_2) =2$, $\text{deg}(L_3) =-4$, and $\text{deg}(L_4) =-5$. 
	Here $\mu(E) = \frac{1}{7}$ and $E$ is not semistable.

The Harder-Narasimhan filtration of $E$ is the following
\[
0 \subseteq L_1 \subseteq L_1 \oplus L_2 \subseteq L_1 \oplus L_2 \oplus {\mathcal{O}_X}^{\oplus 3} \subseteq L_1 \oplus L_2 \oplus L_3 \oplus {\mathcal{O}_X}^{\oplus 3} \subseteq E.
\]   
Assumption \ref{assume} holds for the flag bundle $\text{Fl}(6, 5, 2, 1, E)$. So all our results are applicable in this example. 
\end{example}

\bibliographystyle{plain}

\begin{thebibliography}{100}


\bibitem{B} T. Bauer, Seshadri constants and periods of polarized abelian varieties, 
Math. Ann. {\bf 312} (1998), no.~4, 607--623.

\bibitem{B99} T. Bauer, Seshadri constants on algebraic surfaces, Math. Ann. {\bf 313} (1999), no.~3, 547--583. 

\bibitem{primer} T. Bauer, S. Di Rocco, B. Harbourne, M. Kapustka, A. Knutsen, W. 
Syzdek and T. Szemberg, A primer on Seshadri constants, {\it Interactions of 
classical and numerical algebraic geometry}, 33--70, Contemp. Math., 496, Amer. Math. 
Soc., Providence, RI, 2009.

\bibitem{BS} T. Bauer\ and\ T. Szemberg, Seshadri constants on surfaces of general type, Manuscripta Math. {\bf 126} (2008), no.~2, 167--175. 

\bibitem{BHNN} I. Biswas, K. Hanumanthu, D. S. Nagaraj \and\  P. E. Newstead,
Seshadri constants and Grassmann bundles over curves, Ann. Inst. Fourier (Grenoble) {\bf 70} (2020), no.~4, 1477--1496.

\bibitem{BHP} I. Biswas, A. Hogadi\ and\ A.~J. Parameswaran, Pseudo-effective cone of Grassmann bundles over a curve, Geom. Dedicata {\bf 172} (2014), 69--77.

\bibitem{BP} I. Biswas\ and\ A.~J. Parameswaran, Nef cone of flag bundles over a curve, Kyoto J. Math. {\bf 54} (2014), no.~2, 353--366.

\bibitem{Dem} J.-P. Demailly, Singular Hermitian
metrics on positive line bundles, {\it Complex algebraic varieties}
(Bayreuth, 1990), 87--104, Lecture Notes in Math., 1507, Springer, Berlin, 1992. 

\bibitem{H} B. Harbourne, Seshadri constants and very ample divisors on algebraic surfaces, J. Reine Angew. Math. {\bf 559} (2003), 115--122.

\bibitem{Laz} R.~K. Lazarsfeld, {\it Positivity in algebraic geometry. I}, Ergebnisse der Mathematik und ihrer Grenzgebiete. 3. Folge. A Series of Modern Surveys in Mathematics, 48, Springer, Berlin, 2004.

\bibitem{L} S. Lee, Seshadri constants and Fano manifolds, Math. Z. {\bf 245} (2003), no.~4, 645--656.


\end{thebibliography}

\end{document}